\newcommand{\ind}[1]{1_{#1}}
\newcommand{\bbn}{\mathbb{N}}
\newcommand{\bbz}{\mathbb{Z}}
\newcommand{\bbc}{\mathbb{C}}
\newcommand{\bbr}{\mathbb{R}}
\newcommand{\bbq}{\mathbb{Q}}
\newcommand{\abs}[1]{\left\lvert #1\right\rvert}
\newcommand{\Abs}[1]{\lvert #1\rvert}
\newcommand{\brac}[1]{\left( #1\right)}
\newcommand{\norm}[1]{\left\lVert #1\right\rVert}
\newcommand{\td}{\,\mathrm{d}}
\newcommand{\Mod}[1]{\ (\mathrm{mod}\ #1)}
\newcommand{\A}{\mathcal{A}}
\newcommand{\B}{\mathcal{B}}
\newcommand{\C}{\mathcal{C}}
\renewcommand{\P}{\mathcal{P}}
\newtheorem{theorem}{Theorem}
\newtheorem{lemma}{Lemma}
\newtheorem{proposition}{Proposition}
\newtheorem{definition}{Definition}
\begin{document}
\title{A new upper bound for sets with no square differences}
\author{Thomas F. Bloom and James Maynard}
\date{}

\begin{abstract}
We show that if $\A\subset \{1,\ldots,N\}$ has no solutions to $a-b=n^2$ with $a,b\in \A$ and $n\geq 1$ then 
\[\lvert \A\rvert \ll \frac{N}{(\log N)^{c\log\log \log N}}\]
for some absolute constant $c>0$. This improves upon a result of Pintz-Steiger-Szemer\'edi.
\end{abstract}
%
%
%
%

\maketitle
%
%
%
%
\section{Introduction}

Answering a question of Lov\'{a}sz, S\'{a}rk\"{o}zy \cite{Sa:1978} and Furstenberg \cite{Fu:1977} independently showed that any set of integers whose difference set contains no non-zero squares must have asymptotic density zero. S\'{a}rk\"{o}zy's proof was based on the circle method, and actually gave the quantitative bound that if $\A\subseteq\{1,\dots,N\}$ has no non-zero square differences then $|\A|\le N/(\log{N})^{1/3+o(1)}$, whereas Furstenberg's result relied on ergodic theory. There have since been a variety of proofs of the qualitative result $|\A|=o(N)$; we refer the reader to the introduction of \cite{Ri} for more details.

S\'{a}rk\"{o}zy's argument was refined by  Pintz, Steiger, and Szemer\'{e}di \cite{PiStSz:1988} who improved the upper bound on the size of $\A\subseteq\{1,\dots,N\}$ with no non-zero square differences to
\begin{equation}
|\A|\ll \frac{N}{(\log N)^{c\log \log \log \log N}}
\label{eq:PSS}
\end{equation}
for some absolute constant $c>0$. (Here we use Vinogradov's notation $X\ll Y$ to mean that $X\leq CY$ for some absolute constant $C>0$.) One interesting feature of \eqref{eq:PSS} is that it is a noticeably stronger bound than what is currently known for Roth's Theorem on three term arithmetic progressions \cite{BloomSisask}, despite both proofs following a Fourier-analytic density increment argument.

In this paper we improve the upper bound \eqref{eq:PSS} for the size of sets of integers with no square differences.
%
%
%
%
\begin{theorem}\label{thmain}
Let $N$ be sufficiently large. If $\A\subset \{1,\ldots,N\}$ has no solutions to $a-b=n^2$ with $a,b\in \A$ and $n\geq 1$, then 
\[\abs{\A}\ll \frac{N}{(\log N)^{c\log \log\log N}}\]
for some absolute constant $c>0$. 
\end{theorem}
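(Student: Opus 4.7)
The approach is the Fourier-analytic density increment following S\'ark\"ozy and refined by Pintz--Steiger--Szemer\'edi: assuming $\A \subseteq \{1,\ldots,N\}$ has density $\alpha := \abs{\A}/N$ and no square differences, I deduce a density increment on some structured subset and iterate. To obtain the improved exponent $\log\log\log N$ instead of $\log\log\log\log N$, the iteration must lose much less per step than in PSS, which is where the main work lies.

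My first step would be the standard Fourier setup: embed $\A$ in $\bbz/M\bbz$ for a modulus $M \asymp N$ and use the identity
\[
0 = \sum_{\substack{a,b \in \A \\ n \geq 1}} \ind{a - b = n^2} = \int_{\bbr/\bbz} \abs{\widehat{1_\A}(\theta)}^2 \sum_{n \leq \sqrt{N}} e(n^2 \theta) \td\theta.
\]
Weyl-sum estimates for $\sum_{n \leq \sqrt{N}} e(n^2\theta)$ concentrate on major arcs near rationals $p/q$ of small denominator, so the vanishing on the left forces $\abs{\widehat{1_\A}}^2$ to place anomalously large mass on a set $\Gamma$ of major-arc frequencies. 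In particular one expects some subset $\Gamma' \subseteq \Gamma$ of size $\mathrm{poly}(\log(1/\alpha))$ to carry the bulk of this excess mass.

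Next comes the density increment. The PSS strategy selects a single frequency $p/q \in \Gamma$, extracts a density increment on an arithmetic progression of common difference $q^2$, and recurses; the length of this progression is only $N^{c\alpha^2}$, which ultimately produces the weak $\log\log\log\log N$ bound. The key improvement I would pursue is to localise simultaneously to \emph{all} frequencies in $\Gamma'$ by passing to a Bohr-type set $B(\Gamma', \rho)$---morally, an intersection of arithmetic progressions with square common differences---of dimension $d = \mathrm{poly}(\log(1/\alpha))$ and radius $\rho$ polynomial in $\alpha$. Such a set is only polynomially smaller than $N$, so the iteration can be run $\gg \log(1/\alpha)$ times before the ambient set is exhausted, and each step gains a factor $(1+c)$ in density; this is what yields the claimed bound.

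The main obstacle is implementing this Bohr-set density increment for a \emph{squares}-type problem rather than an AP-type problem. The large-spectrum frequencies $p/q$ give information about how $\A$ correlates with residue classes modulo $q$, but the combinatorial constraint we need to exploit---avoidance of square differences---is a condition modulo $q^2$ and involves Gauss-sum factors, so the frequency set $\Gamma'$ must be chosen to respect this quadratic structure. A further subtlety is that the Fourier/Weyl analysis for squares has to be re-run inside each iterated Bohr set, which requires sharp estimates for restricted quadratic Weyl sums together with a Chang-- or Sanders-type structural theorem to keep the dimension of the successive Bohr sets bounded by $\mathrm{poly}(\log(1/\alpha))$ throughout the iteration. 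Reconciling these three requirements---full spectral coverage, quadratic compatibility, and bounded dimension---while retaining the density gain is the heart of the proof.
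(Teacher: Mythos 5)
Your setup — the circle method identity, concentration of the Weyl sum on major arcs, and a density increment iteration — matches the paper, but the mechanism you propose for improving on Pintz--Steiger--Szemer\'edi is not the one the paper uses, and the difficulty you (rightly) flag at the end of your sketch is never resolved.

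First, your diagnosis of where PSS is lossy is slightly off. The progression of difference $q^2$ that PSS pass to has length $\alpha^{O(1)}N$, not $N^{c\alpha^2}$; the multiplicative loss in $N$ per step is the same order of magnitude in PSS as in the present paper (see Lemma~\ref{lem-toit}, where $N'\gg \alpha^{O(1)}N$). What is weak in PSS is the size of the density \emph{increment} $\nu$ per step, and that is exactly what the paper improves. Correspondingly, the paper does not pass to Bohr sets at all — the increment is still onto an ordinary arithmetic progression of common difference $q^2$, just as in S\'ark\"ozy and PSS (Lemma~\ref{lem-dens}). Your proposed route of iterating inside Bohr sets $B(\Gamma',\rho)$, re-running the quadratic Weyl analysis there, and controlling dimension via a Sanders-type structure theorem is a genuinely different programme; you yourself identify ``reconciling full spectral coverage, quadratic compatibility, and bounded dimension'' as the heart of the matter, and the sketch does not indicate how any of these three would actually be achieved. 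In particular the major-arc frequencies are near rationals $a/q$ with $q$ potentially as large as $\alpha^{-O(1)}$, so the naive Bohr set $\bigcap_{a/q\in\Gamma'}\{n:\|na/q\|<\rho\}$ collapses to a progression of difference $\mathrm{lcm}$ of the $q$'s, which can exceed $N$ once $|\Gamma'|$ is polylogarithmic in $1/\alpha$. Some genuinely new input is needed, and the sketch does not supply it.

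The actual key new tool in the paper is absent from your proposal: Theorem~\ref{th:main}, an upper bound $E_{2m}(\B)\leq (\log mQ)^{C^m}(Qn)^m$ on the $2m$-fold additive energy of any set $\B$ of rationals with denominators $\le Q$ and at most $n$ elements per denominator. Combined with a variant of Chang's lemma (Lemma~\ref{lem:Chang}), this shows that a large set of rationals in the spectrum of $\A$ with nearly-distinct small denominators would have to have substantial additive structure, which the energy bound forbids; hence many of the large Fourier coefficients must in fact share a common denominator $q$, and an $L^2$ density increment modulo $q^2$ follows (Lemmas~\ref{lem:TripleAlternative} and~\ref{lem-toit}) with the much larger $\nu=\exp(-c\log(1/\alpha)/\log\log(1/\alpha))$. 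Without this energy bound — or some substitute — your argument has no engine to convert ``many large Fourier coefficients near rationals'' into a strong increment, and the claimed $\log\log\log N$ exponent is not reached.
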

%
%
%
%
Our proof of Theorem \ref{thmain} follows a Fourier-analytic density increment argument as with previous approaches, but is actually more direct (and, we hope, simpler) than the approach of Pintz-Steiger-Szemer\'{e}di. The key new tool in our work is an upper bound for the additive energy of sets of rationals with small denominator, which may be of independent interest. To state this we recall that the $2m$-fold additive energy of a set $\B$ is given by
\begin{align*}
E_{2m}(\B):= \lvert \{ (b_1,\ldots,b_{2m})\in \B^{2m} : b_1+\cdots+b_m=b_{m+1}+\cdots+b_{2m}\}\rvert.
\end{align*}
We also introduce the notation
\begin{align*}
\bbq_{=q} &:= \left\{ \frac{a}{q}\in [0,1] : 1\leq a\leq q\textrm{ and }\gcd(a,q)=1\right\},\\
\bbq_{\leq Q} &:= \bigcup_{1\leq q\leq Q}\bbq_{=q},
\end{align*}
to denote the set of reduced rationals in $[0,1]$ with denominator precisely $q$, and for the set of all rationals with denominator at most $Q$. Our additive energy result is then the following.
%
%
%
%
\begin{theorem}\label{th:main}
Let $Q\geq 4$ and $m\geq 2$. Suppose that $\B\subset \bbq_{\leq Q}$ is such that there is $n\geq 1$ with $\abs{\B\cap \bbq_{=q}}\leq n$  for any $1\leq q\leq Q$. Then we have
\[
E_{2m}(\B) \leq (\log Q)^{C^m} (Qn)^m
\]
for some absolute constant $C>0$.
\end{theorem}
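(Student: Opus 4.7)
The plan is to reformulate $E_{2m}(\B)$ as a Fourier moment and then proceed by induction on $m$. First, observe that passing from $\bbr$-equality to equality modulo $1$ loses only a factor of $O(m)$ (by Cauchy--Schwarz on the shifted $2m$-fold energies $\sum_x r_m(x)r_m(x-k)$ for integer shifts $k\in[-m,m]$), so it suffices to bound the modular energy. Since all denominators in $\B$ divide $M := \mathrm{lcm}(1,\ldots,Q)$, we have $\B\hookrightarrow G:=\bbz/M\bbz$, and Plancherel gives
\[
E_{2m}(\B) \ll m\cdot \frac{1}{|G|}\sum_{\xi\in G}|\widehat{\ind{\B}}(\xi)|^{2m}.
\]

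The main structural input is the Chinese Remainder decomposition $G\cong\prod_{p\leq Q}\bbz/p^{E_p}\bbz$ with $E_p = \lfloor\log Q/\log p\rfloor$. Under this decomposition each character $\xi$ factorises as $(\xi_p)_p$ and each $b\in\B$ decomposes via partial fractions as $b = \sum_p b_p$ with $b_p\in\bbq_p/\bbz_p$. The hypothesis $|\B\cap\bbq_{=q}|\leq n$ translates into a bound on the number of elements of $\B$ whose $p$-part has denominator exactly $p^e$ for each prime power $p^e\leq Q$, which is the form in which I expect the input to be useful.

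Based on the $(\log mQ)^{C^m}$ shape of the bound, I would aim for a \emph{doubling} recursion of the form
\[
E_{2(m+1)}(\B) \leq (\log(mQ))^{C'}\cdot \frac{E_{2m}(\B)^{2}}{(Qn)^{m-1}},
\]
whose repeated application from $E_2(\B)=|\B|\leq Qn$ iterates---via $T(m+1)\leq (\log mQ)^{C'} T(m)^2$ for $T(m):=E_{2m}(\B)/(Qn)^m$---to the claimed $(\log mQ)^{C^m}(Qn)^m$ bound (with $C$ proportional to $C'$). This is the characteristic double-exponential $\log$-exponent of the theorem, and a linear recursion $T(m+1)\leq(\log mQ)^{O(1)}T(m)$ would in fact give a stronger bound than the one stated, so the quadratic form seems unavoidable.

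The crux is proving such a recursion. I would expand
\[
E_{2(m+1)}(\B) = \sum_{\tau}|\B\cap(\B+\tau)|\cdot E_{2m}(\B;\tau),
\]
where $E_{2m}(\B;\tau)$ denotes the shifted $2m$-fold energy; the diagonal $\tau=0$ gives the expected main term $|\B|\cdot E_{2m}(\B)$. For the off-diagonal tail I would apply a Chang-type large-spectrum estimate for $\widehat{\ind{\B}}$ adapted to the Farey setting, using the prime-by-prime factorisation of characters together with the per-denominator hypothesis to bound the spectrum. The main obstacle is quantitative: naive large-spectrum bounds lose polynomial factors in $Qn$, which is catastrophic when iterated $m$ times; the required $(\log mQ)^{O(1)}$ loss per step will demand a delicate layer-cake argument that combines the product-over-primes structure with sharp per-prime-power exponential sum estimates.
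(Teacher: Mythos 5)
There is a genuine gap: the entire proof hinges on the proposed doubling recursion $E_{2(m+1)}(\B)\leq(\log (mQ))^{C'}E_{2m}(\B)^2/(Qn)^{m-1}$, and you neither prove it nor give a credible route to it. Your own sketch (expanding over shifts $\tau$ and invoking a Chang-type large-spectrum bound for $\widehat{\ind{\B}}$) is exactly where you concede that the natural estimates lose polynomial factors in $Qn$, which is fatal; nothing in the proposal indicates how the hypothesis $|\B\cap\bbq_{=q}|\leq n$ would be brought to bear to remove that loss. The CRT reduction does not help here: although characters of $\bbz/M\bbz$ factor over prime powers, the set $\B$ does not, so the energy does not factor, and the per-denominator hypothesis translates only weakly to the $p$-parts (a set meeting each $\bbq_{=q}$ at most $n$ times can still have $\approx nQ/p^e$ elements whose $p$-component has denominator exactly $p^e$, e.g.\ when the denominators are distinct multiples of $p^e$). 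So the structural input you plan to exploit prime-by-prime is much weaker than the actual hypothesis, and the key inequality remains unsubstantiated.

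Your heuristic that a quadratic recursion is ``unavoidable'' because a linear one would beat the stated bound is also misleading, and it points away from how the theorem is actually proved. The paper's argument is a step-by-step recursion that peels off one convolution at a time: it proves (Lemma \ref{le:rat}, via an elementary bipartite-graph/popular-colour argument built on $\gcd(k,\ell)$ and $\gcd((a\ell+bk)/d,d)$, with no Fourier analysis) a weighted bound of the shape
\[
\sideset{}{'}\sum_{c/q}\tau_3(q)^{2t}\ind{\B}^{(\ast j)}(c/q)^2\ll (m\log Q)^{3}\,Qn\,M_{\mathrm{log}}(\tau_3^{2t+2};Q)\sideset{}{'}\sum_{c/q}\tau_3(q)^{2t+2}\ind{\B}^{(\ast j-1)}(c/q)^2,
\]
so each step costs only $(\log Q)^{O(1)}Qn$ but \emph{increases the divisor-function weight}; after $m-1$ steps the accumulated weight contributes $M(\tau_3^{2m};Q)\ll(\log Q)^{3^{2m}}$, and this is the true source of the $(\log(mQ))^{C^m}$ factor --- not a doubling of the energy exponent. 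In short, the claimed bound is compatible with a linear-in-$m$ iteration whose per-step loss grows through the weights, and your proposal is missing both the combinatorial mechanism (the gcd/popular-colour counting that exploits $|\B\cap\bbq_{=q}|\leq n$) and a proof of the inequality that your iteration scheme requires.
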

%
%
%
%
We note that there is a trivial lower bound $E_{2m}(\B)\ge |\B|^m$ from diagonal solutions where $b_{i}=b_{i+m}$ for $1\leq i\leq m$. If $\B$ contains $n$ rationals with denominator $q$ for each $q\in [Q/2,Q]$ then $|\B|\gg nQ$ and we see that Theorem \ref{th:main} gives an upper bound of the form $|\B|^m (\log{|\B|})^{C^m}$, and so the contribution from the diagonal terms is comparable to the whole contribution. Thus sets of rationals with small distinct denominators have similar additive energy estimates to dissociated sets where the only solutions to $b_1+\dots+b_m=b_{m+1}+\dots+b_{2m}$ are the diagonal ones. 

Dissociated sets have been used in additive combinatorics since at least the work of Chang \cite{Chang}, and Theorem \ref{th:main} allows one to extend Chang's ideas to sets whose large Fourier coefficients are close to rationals with small denominators, as is the situation in the Furstenberg-S\'{a}rk\"{o}zy  problem. The original argument of Pintz-Steiger-Szemer\'{e}di can be viewed as showing that there is a lack of additive structure in the rationals which make up the large Fourier coefficients, but Theorem \ref{th:main} allows for a more efficient and direct use of this idea. Indeed, the original argument of \cite{PiStSz:1988} proves (implicitly) a lower bound for the size of the $m$-fold sumset, namely something of the strength of
$\abs{m\B}\gg_{m,\epsilon} \abs{\B}^{m-\epsilon}$, which follows from Theorem \ref{th:main} and $|m\B|\gg |\B|^{2m}/E_{2m}(\B)$. An important feature of Theorem \ref{th:main} for our work is that it remains non-trivial even with $m$ as large as a small multiple of $\log\log{Q}$.

Clearly one can have sets $\B\subset\bbq_{\leq Q}$ with large additive energy if many elements of $\B$ have the same denominator, and so the restriction $|\B\cap\bbq_{=q}|\leq n$ is natural for this problem.
\subsubsection*{Other results and generalizations}

For comparison, the best known lower bound for the size $r(N)$ of the largest set $\A\subset\{1,\dots,N\}$ with no non-zero square differences is much smaller than the upper bound in Theorem~\ref{thmain}. Ruzsa \cite{Ru:1984} gives a construction that shows in particular that $r(N) \gg N^{0.73}$. The constant in the exponent here has been slightly improved by Lewko \cite{Le:2015}, but even whether $r(N)\gg N^{3/4}$ is open. We do not know where the truth lies, and whether the true order of magnitude of $r(N)$ is $N^{1-o(1)}$ or $N^{1-c}$ for some absolute constant $c>0$ remains a fascinating open problem.

For the analogous problem in the function field case, where $\bbz$ is replaced by the polynomial ring $\mathbb{F}_q[t]$ over some finite field $\mathbb{F}_q$, much stronger quantitative bounds are known. Using the polynomial method, Green \cite{Gr:2017} has recently shown that if $\A\subset \mathbb{F}_q[t]_{\mathrm{deg}<n}$ contains no non-zero square differences then
\begin{equation}\label{fqtgreen}
\lvert \A \rvert \ll q^{(1-c(q))n},
\end{equation}
where $c(q)>0$ is some constant depending only on $q$. Since $\mathbb{F}_q[t]_{\mathrm{deg}<n}$ has size $q^n$, this bound is analogous to a bound of the shape $r(N) \ll N^{1-c}$ in the integer case. The polynomial method used by Green is very different to the analytic arguments used in this paper, and depends in a fundamental way on the bounded characteristic of $\mathbb{F}_q[t]$. 

The method of Pintz, Steiger, and Szemer\'{e}di \cite{PiStSz:1988} has been generalised to yield a similar bound for related problems. This was done for sets without differences of the form $n^k$ for any fixed $k\geq 3$ by Balog, Pelikan, Pintz, and Szemer\'{e}di \cite{BaPePiSz:1994}, and then recently by Rice \cite{Ri} to differences of the form $f(n)$ where $f\in\bbz[x]$ is any intersective polynomial\footnote{A polynomial $f\in\bbz[x]$ is intersective if it is non-zero and for every $q\in\mathbb{N}$ there is $n\in\bbz$ such that $q\mid f(n)$.} of degree at least 2. These proofs directly extend the method of \cite{PiStSz:1988}, and as such it seems likely that one could combine the ideas of \cite{BaPePiSz:1994} and \cite{Ri} with those in this paper to obtain a quantitative bound of strength comparable to Theorem~\ref{thmain} for these generalisations; we do not address these questions here.

Recent work of the second author \cite{Maynard} showed that any system of polynomials simultaneously attain values with small fractional parts. There are various similarities with this work (a density increment argument enhanced by there being few solutions to linear equations involving rationals with small denominator), but there the problem was more structured which allowed for an almost optimal bound of the form $x^{1-c}$, whereas in this situation we are forced to consider much more arbitrary sets $\A$.

An upper bound for the additive energy of sets of well-distributed rationals similar (qualitatively) to Theorem~\ref{th:main} has also been applied within theoretical computer science, where it was used by Bourgain, Dilworth, Ford, Konyagin, and Kutzarova \cite{BDFKK} to construct matrices satisfying the Restricted Isometry Property. It follows from their Lemma 5, for example, that if $\B\subset \bbq_{\leq Q}$ is such that for any $1\leq q\leq Q$ we have $\abs{\B\cap \bbq_{=q}}\leq 1$, then, for any $\epsilon>0$, we have $E_{2m}(\B)\ll_{m,\epsilon}Q^\epsilon\abs{\B}^m$, where the dependence on $m$ and $\epsilon$ is unspecified. It is vital for our purposes that we explicitly control the dependence on $m$ and $\epsilon$.
%
%
%
%
\subsubsection*{Acknowledgements}

T.B. would like to thank Julia Wolf for many helpful discussions regarding the original proof of Pintz, Steiger, and Szemer\'{e}di (and recommends the first chapter of \cite{Wo} for a very readable exposition), and would also like to thank Alex Rice for pointing out an error in an earlier draft of this paper. J.M. would like to thank Ben Green for introducing him to this question. T.B. was supported by a
postdoctoral grant funded by the Royal Society. J.M. was supported by a Royal Society Wolfson Merit Award, and funding from the European Research Council (ERC) under the European Union's     Horizon 2020 research and innovation programme (grant agreement No 851318).
%
%
%
%
\section{Outline}

In this section we sketch how Theorem \ref{th:main} can be used to give Theorem \ref{thmain}. As mentioned in the introduction, our proof is similar to the original work of S\'{a}rk\"{o}zy \cite{Sa:1978} (and its later refinements) in that we base our argument on a density increment argument coming out of the circle method. Our Theorem \ref{th:main} allows us to show that no set $\A$ can have many large Fourier coefficients which are rationals with small distinct denominators, and this is the key which allows us to have a more efficient density increment argument than that of \cite{PiStSz:1988}. 

First we recall the basic setup. If $\A\subset\{1,\dots,N\}$ has no non-zero square differences and density $\alpha=|\A|/N$, then by the circle method
\begin{align*}
0&=|\{(a_1,a_2,n):\,a_1-a_2=n^2,\,a_1,a_2\in\A,\,1\le n\le N^{1/2}\}|\\
&=\int_0^1 |\widehat{1}_{\A}(\gamma)|^2\widehat{1}_{\square}(\gamma)\mathrm{d}\gamma,
\end{align*}
where $\widehat{1}_{\A}(\gamma)=\sum_{a\in \A}e(a\gamma)$ is the Fourier transform of the set $\A$, and similarly $\widehat{1}_{\square}$ is the Fourier transform of squares in $[1,N]$. Comparing this to $\alpha^2 N^{3/2}$, which is the expected count of solutions in a random set of density $\alpha$, we find 
\[
\int_0^1 |\widehat{g}_{\A}(\gamma)|^2|\widehat{1}_{\square}(\gamma)|\mathrm{d}\gamma\gg \alpha^2 N^{3/2},
\]
where $g_{\A}=1_{\A}-\alpha 1_{[N]}$ is the balanced function of $\A$. If $\gamma\approx a/q$ then $\lvert\widehat{1}_{\square}(\gamma)\rvert\approx N^{1/2}/q^{1/2}$, and so by the pigeonhole principle there must be some $Q$ for which
\[
\int_{\substack{\gamma\approx a/q\\Q\le q\le 2Q}}|\widehat{g}_{\A}(\gamma)|^2\mathrm{d}\gamma\gg \frac{\alpha^2 N Q^{1/2}}{(\log{Q})^2},
\]
In particular there must be some parameter $B$ such that there are `many' ($\gg Q^{1/2}B^2/(\log{Q})^3$) rationals of $a/q$ with $q\in[Q,2Q]$ for which $\widehat{g}_{\A}(a/q)$ is large ($\gg |\A|/B$). Parseval's bound implies that this can only happen if $Q\le \alpha^{-O(1)}$, and so these rationals have small denominators. The basic density increment strategy is then to deduce that this implies that there is a large arithmetic progression on which $\A$ has density $\alpha+\rho\alpha$ (for some suitable $\rho>0$), so this argument can then be iterated.

If there were at least $\rho^2 B^2$ different rationals $\gamma$ with the same denominator $q$ where $|\widehat{g}_\A(\gamma)|\gg |\A|/B$, then the existence of such an arithmetic progression follows from
\[
\sum_{b\Mod{q}}\Bigl(|\{a\in\A:\,a\equiv b\Mod{q}\}|-\frac{|\A|}{q}\Bigr)^2=\frac{1}{q}\sum_{c\Mod{q}}\Bigl|\widehat{g}_{\A}\Bigl(\frac{c}{q}\Bigr)\Bigr|^2\gg \frac{\rho^2 |\A|^2}{q}.
\]
Thus we may assume that for any $q\in[Q,2Q]$ there are $O(\rho^2 B^2)$ rationals $\gamma$ with denominator $q$ where $|\widehat{g}_{\A}(\gamma)|\gg |\A|/B$. So far our approach has followed previous works, but now we note that if $\B$ is this set of rationals with $|\widehat{g}_{\A}(b)|\gg|\A|/B$, then a variation of the proof of Chang's lemma shows roughly that for any choice of $m\ge1$
\begin{align*}
\frac{|\A|\cdot |\B|}{B}\ll \sum_{b\in \B}|\widehat{g}_{\A}(b)|
&\le |\A|^{1-1/2m}N^{1/2m}E_{2m}(\B)^{1/2m}.
\end{align*}
In particular, there cannot be a large set $\B$ with very small additive energy whilst also having the Fourier transform $\widehat{g}_\A$ of $\A$ large on all elements of $\B$. We can now apply Theorem \ref{th:main} to bound $E_{2m}(\B)$, since $\B$ is a set of rationals of small denominator with $O(\rho^2 B^2)$ elements having any given denominator. This rearranges to give (for any choice of $m$)
\[
|\B|\ll \rho\alpha^{-1/2m}(\log Q)^{C^m}Q^{1/2} B^2.
\]
This contradicts the lower bound $|\B|\gg Q^{1/2}B^2/(\log{Q})^3$ if
\[
\rho\approx  \frac{\alpha^{1/2m}}{(\log Q)^{C^m+3}}.
\]
Choosing $m=c\log\log({1/\alpha})$ for some suitably small constant $c>0$ and recalling $Q\ll \alpha^{-O(1)}$, this implies that there is an arithmetic progression $\P\subseteq \{1,\dots,N\}$ with $|\P|\ge \alpha^{O(1)}N$ on which $\A$ has density $\alpha(1+\rho)$, where
\[
\rho\approx  \exp\Bigl(-C\frac{\log{1/\alpha}}{\log\log{1/\alpha}}\Bigr)
\]
for some absolute constant $C>0$. Iterating this statement then gives Theorem \ref{thmain}.

Theorem \ref{th:main} is established using a quite separate argument. Although it can be deduced from a direct combinatorial approach based on splitting according to suitable greatest common divisors we use an iterative argument which we hope is cleaner.
%
%
%
%
\section{Notation}

We begin by establishing the basic notation that we will use.  For any $N\geq 1$ we use $[N]$ to denote the set $\{1,\ldots,N\}$. We fix throughout our proof some large integer $N$ (large enough in particular such that $\log\log\log N\geq 4$, say). For functions $f:\bbz\to\bbc$ we define the Fourier transform $\widehat{f}:[0,1]\to \bbc$ by
\[\widehat{f}(\gamma) = \sum_{n\in \bbz}f(n)e(\gamma n),\]
where $e(x)=e^{2\pi i x}$. We define the convolution of two functions $f,g:\bbz\to\bbc$ by 
\[(f\ast g)(n)= \sum_{m\in\bbz}f(m)g(n-m)\]
and use $f^{(\ast m)}(x)=(f\ast \dots \ast f)(x)$ to denote the $m$-fold iterated convolution of $f$.  Without subscript, the notation $\norm{\gamma}$ denote the distance of $\gamma\in\bbr$ from the nearest integer, while $\norm{f}_2=(\sum_{x\in\bbz}|f(x)|^2)^{1/2}$ and $\norm{f}_{\infty}=\sup_{x\in\bbz}|f(x)|$ denotes the usual $L^2$ and $L^\infty$ norms. 

We write $\tau_3(n)$ to denote the ternary divisor function $\sum_{abc=n}1$.

%
%
%
%
\section{Addition of rational numbers and the proof of Theorem \ref{th:main}}

In this section we prove Theorem \ref{th:main}. This section is essentially self-contained and can be read independently of the rest of the paper.

Theorem \ref{th:main} follows quickly from the more technical Proposition \ref{th:rat}. To state this we require some more notation. For any function $\omega:\bbn\to \bbr$ we define the maximal average function of $\omega$ by 
\begin{equation}
M(\omega;X):= \max_{1\leq x\leq X}\frac{1}{x}\sum_{n\leq x}\omega(n),\label{eq:MDef}
\end{equation}
and the logarithmic maximal average by
\begin{equation}
M_{\mathrm{log}}(\omega;X):=\max_{2\leq x\leq X}\frac{1}{\log x}\sum_{n\leq x}\frac{\omega(n)}{n}.\label{eq:MLDef}
\end{equation}
We recall that $\tau_3(n)$ is the ternary divisor function $\sum_{abc=n}1$. Our technical bound on the additive energy is the following.

\begin{proposition}[Rationals with small denominators have small additive energy]\label{th:rat}
Let $m\geq 2$. Suppose that $\B\subset \bbq_{\leq Q}$ and $n\geq 1$ is such that for any $1\leq q\leq Q$ we have $\abs{\B\cap \bbq_{=q}}\leq n$. Then we have the upper bound
\[E_{2m}(\B) \leq (m\log Q M_{\mathrm{log}}(\tau_3^{2m};Q))^{O(m)}M(\tau_3^{2m-2};Q)(Qn)^m.\]
\end{proposition}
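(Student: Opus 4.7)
The plan is to induct on $m$ and prove a one-step recursion roughly of the form
\[E_{2m}(\B)\ll (m\log Q\cdot M_{\mathrm{log}}(\tau_3^m;2mQ))^{C}\cdot Qn\cdot E_{2(m-1)}(\B)\]
for some absolute constant $C>0$; iterating $m-1$ times and using the trivial base case $E_2(\B)=|\B|\le Qn$ would give the claimed bound, with the last step contributing the extra $M(\tau_3^m;2mQ)$ factor rather than a further $M_{\mathrm{log}}$.

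The short-interval hypothesis---that $\B$ is contained in an interval of length at most $1/(2m)$---is first used to replace the additive equation counted by $E_{2m}(\B)$ with a genuine equality $b_1+\cdots+b_m=b_{m+1}+\cdots+b_{2m}$ in $\bbq$: the two sides lie in an interval of diameter at most $1/2$, so they agree if and only if they agree modulo $1$. Clearing denominators by $L=\mathrm{lcm}(q_1,\ldots,q_{2m})$ then turns this into a rigid integer linear relation $\sum_{i\le m}a_iL/q_i=\sum_{i>m}a_iL/q_i$, with $\gcd(a_i,q_i)=1$ and $q_i\le Q$. It is this rigidity of the equation in $\bbq$---absent for arbitrary real sets---that should provide the savings over the trivial bound $E_{2m}(\B)\le \abs{\B}^{2m-1}$.

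For the inductive step I would isolate a distinguished variable $b_1=a_1/q_1$, performing a dyadic decomposition $q_1\in[Q',2Q']$ to pick up a factor of $\log Q$. Once the other $2m-1$ variables are fixed, their combination determines $a_1/q_1$ uniquely as a rational $N/D$ with $D\mid L$, and the number of admissible $(a_1,q_1)\in\bbq_{\le Q}\cap\B$ representing it is controlled by an elementary divisor-type count (roughly, $q_1$ must be a multiple of $D/\gcd(N,D)$ of size at most $Q$). Summing over the other $2m-1$ variables and organising via Cauchy--Schwarz to match $E_{2(m-1)}(\B)$ would produce the recursion above. The ternary divisor function $\tau_3$ arises from a three-factor decomposition $q_1=d_1d_2d_3$ linked to the $\mathrm{lcm}/\gcd$ structure of $q_1$ with the other denominators, and the $m$-th power $\tau_3^m$ reflects that the $m-1$ prior inductive reductions each contribute a factor of $\tau_3$; the averaged quantities $M_{\mathrm{log}}$ and $M$ enter from controlling these divisor sums either on a logarithmic or linear dyadic scale.

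The main obstacle is realising the recursion with a full per-step saving of $Qn$ rather than the $\abs{\B}^{1/2}$ that a naive Cauchy--Schwarz provides. This demands that one use the rationality of $\B$ genuinely: once $2m-1$ of the rationals are fixed, the last one is determined, so the number of admissible completions is at most a divisor count. The delicate bookkeeping is to ensure that iterating this idea accumulates only $\tau_3^m$-type divisor averages rather than higher iterated divisor functions, which is presumably what the ``cleaner iterative argument'' the authors mention is designed to accomplish.
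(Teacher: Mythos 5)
Your overall architecture --- induct on $m$, aim for a one-step gain of roughly $Qn$ times divisor-average and logarithmic factors, with $\tau_3$ arising from gcd structure --- does match the shape of the paper's argument, but the step that actually produces the per-step saving is missing, and that step is the whole content of the proposition. Your mechanism for the inductive step (fix the other $2m-1$ rationals; the last one is then a determined rational $N/D$, represented in $\B$ by an ``elementary divisor-type count'' of pairs $(a_1,q_1)$) gives nothing beyond the trivial bound: elements of $\B$ are reduced fractions, so a fixed rational has at most one representation in $\B$, and this reasoning yields only $E_{2m}(\B)\le\abs{\B}^{2m-1}$, nowhere near $(Qn)^m$. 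What is needed is a bilinear estimate with square-root cancellation in $\B$: for a sub-multiplicative weight $\omega$ and arbitrary $f,g$, the weighted count of solutions to $a/k-b/\ell=c/q$ with $b/\ell\in\B$ must be bounded by roughly $\bigl(Ln(\log L)M_{\mathrm{log}}(\omega\tau_3;L)\bigr)^{1/2}$ times weighted $L^2$-norms of $f$ and $g$ (Lemmas \ref{le:rat} and \ref{lem-toref}); feeding in $f=\ind{\B}^{(\ast j-1)}$, $g=\ind{\B}^{(\ast j)}$ and rearranging is what produces the factor $Qn(\log)^{O(1)}$ per step. The paper proves this bilinear bound by a genuinely combinatorial dichotomy: colour each pair $(a/k,b/\ell)$ by $(d,f)=(\gcd(k,\ell),\gcd((a\ell+bk)/d,d))$; a vertex $a/k$ meets at most $\tau_3(k)$ colours, so unpopular colours contribute at most $T$ neighbours per vertex, while for a popular colour the residue of $a$ modulo $f$ is pinned down (distinct residues use disjoint sets of $b/\ell$), so there are at most $Ln\tau_3(k)/(dT)$ popular residue classes for each $(d,f,k)$; optimising over $T$ gives the square-root saving. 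You explicitly flag obtaining this saving as ``the main obstacle'' rather than supplying an argument for it, so the proposal has a genuine gap exactly at the crux.

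A secondary but real issue is the bookkeeping you defer. The recursion cannot be run on the unweighted energies $E_{2j}(\B)$: the bilinear estimate unavoidably attaches $\tau_3(q)^2$-type weights to the denominators of points in the support of $\ind{\B}^{(\ast j)}$, and these denominators can be as large as $Q^j$, so discarding the weights by a pointwise bound costs up to $\exp\bigl(O(m\log Q/\log\log Q)\bigr)$, which is a power of $Q$ precisely in the regime $m\asymp\log\log Q$ needed for the application. The paper therefore runs the induction on the weighted quantities $\sum_{c/q}\tau_3(q)^{2t}\ind{\B}^{(\ast j)}(c/q)^2$ (sum over reduced fractions), increasing $t$ by one at each step and threading the sub-multiplicative weight through the bilinear lemma; this is exactly what caps the accumulated divisor factors at the averages $M_{\mathrm{log}}(\tau_3^{O(m)};\cdot)$ and $M(\tau_3^{O(m)};\cdot)$ appearing in the statement, with the non-logarithmic $M$ entering only at the base case, as you anticipated. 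Finally, the hypothesis that $\B$ lies in an interval of length $1/2m$ is not needed to rigidify a mod-$1$ relation (the energy is already an exact equation in $\bbq$); after translating by a rational with denominator $2m$, its role is to keep the convolutions $\ind{\B}^{(\ast j)}$, $j\le m$, supported on nonzero rationals, which is required because the bilinear lemma takes $\C\subset\bbq^*$.
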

%
%
%
%
\begin{proof}[Proof of Theorem \ref{th:main} assuming Proposition \ref{th:rat}]
By Rankin's trick, we have the standard divisor function bound
\[
M(\tau_3^k;X)\ll (\log{X})M_{\mathrm{log}}(\tau_3^k;X) \ll \prod_{p\le x} \Bigl(1-\frac{1}{p}\Bigr)^{-3^k}\ll (\log X)^{3^k}.
\]
Therefore Proposition \ref{th:rat} gives
\[
E_{2m}(\B)\ll m^{O(m)}(\log Q)^{O(m9^m)} (Q n)^m.
\]
Simplifying the exponents gives Theorem \ref{th:main}.
\end{proof}
Proposition~\ref{th:rat} will be proved via an iterative application of the following lemma. Roughly speaking, it says that if $B\subset \bbq_{\leq L}$ is spread evenly between different denominators, then for any sets $\A,\C$ we have $\langle \ind{\A},\, \ind{\B}\ast\ind{\C}\rangle \ll (\abs{\A}\abs{\B}\abs{\C})^{1/2}$. This should be compared to the trivial bound of $(\abs{\A}\abs{\C})^{1/2}\abs{\B}$. To attain the quantitative strength of Theorem~\ref{th:rat} we will take care to prove an explicit weighted form of this inequality. To state this lemma precisely we make the following definition.
%
%
%
%
\begin{definition}
An arithmetic function $\omega:\bbn\to \bbr_{\ge 0}$ is sub-multiplicative if $\omega(ab)\leq \omega(a)\omega(b)$ for all $a,b\in \bbn$ and whenever $d\mid n$ we have $\omega(d)\leq \omega(n)$. 
\end{definition}
%
%
%
%
\begin{lemma}[(Few solutions to linear equations in rationals with small denominators)]\label{le:rat}
Let $Z\geq 1$ be some integer. Let $\A\subset \bbq\cap (0,Z]$ and $\C\subset \bbq$. Suppose that $\B\subset \bbq_{\leq Q}$ is such that, for any $1\leq \ell \leq Q$, we have $\abs{\B\cap \bbq_{=\ell}}\leq n$. Let $\omega:\bbn\to\bbr_{\geq 0}$ be some sub-multiplicative function. Then
\[\sideset{}{'}\sum_{\substack{a/k-b/\ell=c/q\\ a/k\in\A, \,b/\ell\in\B,\,c/q\in \C}}\omega(k)\ll 
\brac{QnZ(\log{Q}) M_{\mathrm{log}}(\omega\tau_3;Q)\sideset{}{'}\sum_{c/q\in \C}\omega(q)\tau_3(q)^2\sideset{}{'}\sum_{a/k\in \A}\omega(k)}^{1/2}.\]
Here we use $\sum'$ to indicate that the fractions $a/k,b/\ell,c/q$ are all reduced (i.e. $\gcd(a,k)=\gcd(b,\ell)=\gcd(c,q)=1$).
\end{lemma}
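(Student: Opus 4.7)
The plan is to apply Cauchy--Schwarz twice and then bound the residual sum using the divisor structure of the equation $a/k - b/\ell = c/q$ among reduced rationals.

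First I would decompose $\B = \bigsqcup_{\ell \leq L} \B_\ell$ with $\B_\ell = \B \cap \bbq_{=\ell}$, so that $|\B_\ell| \leq n$ and $|\B| \leq Ln$. Setting
\[
V(b/\ell) := \sideset{}{'}\sum_{\substack{a/k \in \A,\, c/q \in \C \\ a/k - b/\ell = c/q}}\omega(k),
\]
Cauchy--Schwarz on $b/\ell \in \B$ gives $S^2 \leq |\B|\sum_{b/\ell \in \B}V(b/\ell)^2 \leq Ln \sum_{b/\ell \in \B}V(b/\ell)^2$, producing the factor $Ln$. Observing that inside $V$ the rational $a/k$ is determined by $c/q$ (since $a/k = b/\ell + c/q$), a second Cauchy--Schwarz on the $c/q$-sum with weight $\omega(q)\tau_3(q)^2$ yields
\[
V(b/\ell)^2 \leq \Bigl(\sideset{}{'}\sum_{c/q \in \C}\omega(q)\tau_3(q)^2\Bigr)\sum_{c/q:\,b/\ell+c/q \in \A}\frac{\omega(k)^2}{\omega(q)\tau_3(q)^2}.
\]
This produces the factor $\sideset{}{'}\sum_c \omega(q)\tau_3(q)^2$, and reduces the problem to showing
\[
T := \sideset{}{'}\sum_{\substack{a/k-b/\ell=c/q \\ a\in\A,\,b\in\B,\,c\in\C}}\frac{\omega(k)^2}{\omega(q)\tau_3(q)^2} \ll (\log L)\,M_{\mathrm{log}}(\omega\tau_3; L)\sideset{}{'}\sum_{a/k \in \A}\omega(k).
\]

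Next I would bound $T$ by exploiting the divisor structure. If $a/k - b/\ell = c/q$ is reduced and $D = \gcd(k, \ell)$, then writing $k = Dk'$, $\ell = D\ell'$ with $\gcd(k', \ell') = 1$, a short computation shows $k\ell = qDE$ for some integer $E \mid D$; in particular $k/D \mid q$ and $\ell/D \mid q$. By the monotonicity and sub-multiplicativity of $\omega$,
\[
\omega(k) \leq \omega(D)\omega(k/D) \leq \omega(\ell)\omega(q),
\]
so $\omega(k)/\omega(q) \leq \omega(\ell)$. Summing $T$ first over $a/k \in \A$,
\[
T = \sum_{a/k \in \A}\omega(k)\sum_{\substack{b/\ell \in \B \\ a/k-b/\ell\in\C}}\frac{\omega(k)}{\omega(q)\tau_3(q)^2} \leq \sum_{a/k \in \A}\omega(k)\sum_{\substack{b/\ell \in \B \\ a/k-b/\ell\in\C}}\frac{\omega(\ell)}{\tau_3(q)^2},
\]
so the task reduces to bounding the inner $b/\ell$-sum by $O\bigl((\log L)M_{\mathrm{log}}(\omega\tau_3; L)\bigr)$ uniformly in $a/k$. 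Grouping by $D = \gcd(k,\ell) \mid k$ and using $|\B_\ell| \leq n$ together with the monotonicity $\tau_3(\ell/D) \leq \tau_3(q)$ (since $\ell/D \mid q$) should reduce this to a classical weighted divisor-sum estimate of the form $\sum_{\ell \leq L}\omega(\ell)\tau_3(\ell)/\ell$.

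The main obstacle will be this last uniform estimate: the $1/\tau_3(q)^2$ factor must interact with the divisor identity $k\ell = qDE$ precisely so as to produce the $1/\ell$-weighting that defines $M_{\mathrm{log}}$, rather than the naive $L$ from a uniform $\ell$-sum. The $\tau_3$-factor in the statement is chosen exactly to absorb the divisor counting of the factorizations $k\ell = qDE$ with $E \mid D = \gcd(k,\ell)$: for each fixed $q$, the number of valid $(k, \ell, D, E)$-quadruples is controlled by $\tau_3(q)$, and the $\tau_3(q)^2$ in the denominator of $T$ compensates for this multiplicity. Executing this bookkeeping cleanly, so that summing over $\ell$ produces the expected $\omega(\ell)\tau_3(\ell)/\ell$-weight rather than a coarser estimate, is the technical heart of the argument.
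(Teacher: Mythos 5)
There is a genuine gap: your second Cauchy--Schwarz reduces the lemma to a statement that is false. Concretely, you claim it suffices to prove
\[
T := \sideset{}{'}\sum_{\substack{a/k-b/\ell=c/q \\ a/k\in\A,\,b/\ell\in\B,\,c/q\in\C}}\frac{\omega(k)^2}{\omega(q)\tau_3(q)^2} \ll (\log L)\,M_{\mathrm{log}}(\omega\tau_3; L)\sideset{}{'}\sum_{a/k \in \A}\omega(k),
\]
but this fails already for $\omega\equiv 1$, $n=1$, $\A=\{1/1\}$, $\B=\{1/p : p\le L \text{ prime}\}$, $\C=\{(p-1)/p : p\le L \text{ prime}\}$: here every $b\in\B$ produces exactly one solution with $q=p$ and $k=1$, so $T=\sum_{p\le L}\tau_3(p)^{-2}\gg L/\log L$, whereas the right-hand side is $\ll(\log L)^{3}$. (The lemma itself is fine in this example; it is your intermediate target that is too strong.) The structural reason is that the hypothesis $\abs{\B\cap\bbq_{=\ell}}\le n$ is consumed once, crudely, as $\abs{\B}\le Ln$ in the first Cauchy--Schwarz, after which nothing prevents the inner $b/\ell$-sum in $T$ (for a fixed $a/k$) from being of size comparable to $\abs{\B}$ itself; your intended endgame, bounding it uniformly in $a/k$ by a logarithmic divisor sum, cannot work, and no amount of bookkeeping with the factorisation $k\ell=qdf$, $f\mid d=\gcd(k,\ell)$ (which you do identify correctly, as is the inequality $\omega(k)\le\omega(\ell)\omega(q)$) will rescue an inequality that is false.

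The paper's proof is not a Cauchy--Schwarz argument at all. It colours the edges of the bipartite graph $\A\times\B$ by the pair $(d,f)=(\gcd(k,\ell),\gcd((a\ell+bk)/d,d))$, splits edges into popular and unpopular relative to a threshold $T$, and optimizes $T$ at the end (that optimization, not Cauchy--Schwarz, is the source of the square root). The factor $Ln$ enters in a much more refined way than $\abs{\B}\le Ln$: the key arithmetic point is that for a fixed $b/\ell$ and a fixed colour $(d,f)$, the residue $a\Mod{f}$ is determined, so distinct residue classes at which $(d,f)$ is popular use disjoint sets of $b/\ell$'s with $d\mid\ell$, giving the bound $R_{d,f,k}\le Ln\tau_3(k)/(dT)$ via $\sum_{\ell\le L,\,d\mid\ell}\abs{\B\cap\bbq_{=\ell}}\le nL/d$; the $\tau_3(q)^2$ weight then arises from enumerating $k'\ell'e=q$ when reassembling $F_1(c/q)$. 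This injectivity-plus-dichotomy mechanism is exactly the additive-structure input that your double Cauchy--Schwarz discards, so to repair the proposal you would need to reintroduce it rather than refine the divisor-sum estimates.
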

We note that the summation on the left hand side in Lemma \ref{le:rat} can also be written as $\sum_{x\in\C}(\widetilde{\omega}\ind{\A}\ast \ind{-\B})(x)$, where $\widetilde{\omega}(a/q)=\omega(q)$ for $\gcd(a,q)=1$. If $\omega\approx 1$ and $|\B|\approx Qn$ then since $\tau_3$ is typically quite small the bound on the right hand side is roughly of size $(\abs{\A}\abs{\B}\abs{\C})^{1/2}$.

%
%
%
\begin{proof}
Throughout the proof we use $\sum'$ to indicate that the fractions in the summation are reduced.

We claim that for any choice of parameter $T>0$, there is a decomposition of $\A\times \B$ into two sets $\mathcal{E}_1$ and $\mathcal{E}_2$ such that, if we let
\[F_i(x):= \sideset{}{'}\sum_{(a/k,b/\ell)\in \mathcal{E}_i} \omega(k)1_{a/k-b/\ell=x},\]
then we have
\begin{equation}\label{graphbound}
\sideset{}{'}\sum_{c/q\in \C}F_1\Bigl(\frac{c}{q}\Bigr) \leq \frac{QnZ\log Q}{T}M_{\mathrm{log}}(\omega\tau_3;Q)\sideset{}{'}\sum_{c/q\in \C}\omega(q)\tau_3(q)^2
\end{equation}
and
\begin{equation}\label{graphbound2}
\quad\sideset{}{'}\sum_{c/q\in \C}F_2\Bigl(\frac{c}{q}\Bigr)  \leq T\sideset{}{'}\sum_{a/k\in \A}\omega(k).
\end{equation}
The lemma now follows from this claim by choosing 
\[T=\brac{\frac
{Qn Z(\log Q) M_{\mathrm{log}}(\omega\tau_3;Q) \sideset{}{'}\sum_{c/q\in C}\omega(q)\tau_3(q)^2}
{\sideset{}{'}\sum_{a/k\in \A}\omega(k)}}^{1/2},\]
since
\[\sideset{}{'}\sum_{\substack{a/k-b/\ell=c/q\\ a/k\in\A, \,b/\ell\in\B,\,c/q\in \C}}\omega(k)= \sideset{}{'}\sum_{c/q\in C}\brac{F_1(c/q)+F_2(c/q)}.\]
Thus we are left to establish the claim by constructing the sets $\mathcal{E}_1$ and $\mathcal{E}_2$. We consider the complete bipartite graph $G$ with vertex sets $\A$ and $\B$. We first colour the edges of $G$ by giving the edge $(a/k,b/\ell)\in\A\times \B$ the colour $C(a/k,b/\ell)=(d,f)\in\bbz^2$, where
\[d = \mathrm{gcd}(k,\ell)\quad\textrm{ and }\quad f = \mathrm{gcd}\brac{\frac{a\ell-bk}{d},d}.\]
We then say that the colour $(d,f)$ is `popular at $a/k$' if 
 \[|\{ b/\ell\in \B : C(a/k,b/\ell)=(d,f)\}|\geq \frac{T}{\tau_3(k)}.\]
 We say that an edge $(a/k,b/\ell)\in\A\times \B$ is `popular' if its colour is popular at $a/k$, then let $\mathcal{E}_1\subset \A\times \B$ be the set of all popular edges, and let $\mathcal{E}_2$ be the remaining edges $(\A\times \B)\backslash \mathcal{E}_1$.

The bound in \eqref{graphbound2} now follows easily. Indeed, if $a/k\in\A$ has an edge coloured $(d,f)$ connected to $a/k$ then $f|d|k$, so there are at most $\tau_3(k)$ possible different colours of edges connected to $a/k$. Since $\mathcal{E}_2$ only consists of the edges which are not popular, for any colour $(d,f)$ there are at most $T/\tau_3(k)$ vertices $b/\ell$ connected to $a/k$ by an edge of colour $(d,k)$. Thus there are at most $T$ vertices $b/\ell$  in $\B$ connected to any given vertex in $\A$ by an edge in $\mathcal{E}_2$, and so
\begin{align*}
\sideset{}{'}\sum_{ c/q\in \C} F_2(c/q)\leq \sideset{}{'}\sum_{a/k\in \A}\omega(k)\,\sideset{}{'}\sum_{b/\ell\in \B}1_{(a/k,b/\ell)\in \mathcal{E}_2}\leq T\sideset{}{'}\sum_{a/k\in A}\omega(k).
\end{align*}
This gives \eqref{graphbound2}.

%
%
%
It remains to establish \eqref{graphbound}. Given a choice of $d,f$ and $k$, let $R_{d,f,k}$ count the number of distinct possibilities for $a\pmod{f}$ such that the colour $(d,f)$ is popular at some $a/k\in \A$. We will first show that, for any pair $(d,f)$ and $k$, we have 
\begin{equation}\label{est2}
R_{d,f,k}\leq \frac{Qn}{dT}\tau_3(k).
\end{equation}
Let $\A_{d,f,k}\subset \A$ be some subset representing the $R_{d,f,k}$ different possibilities. Therefore 
\begin{enumerate}
\item The colour $(d,f)$ is popular at each $a/k\in \A_{d,f,k}$.
\item If $a/k,a'/k\in A_{d,f,k}$ then $a\not\equiv a'\pmod{f}$.
\item For each $a'/k\in \A$ such that $(d,f)$ is popular at $a'/k$, there is $a/k\in \A_{d,f,k}$ such that $a\equiv a'\pmod{f}$.
\item $R_{d,f,k}=\abs{\A_{d,f,k}}$. 
\end{enumerate}
By the definition of edges being popular at $a/k$, we have
\[R_{d,f,k}\frac{T}{\tau_3(k)}\leq \sideset{}{'}\sum_{a/k\in \A_{d,f,k}}\sideset{}{'}\sum_{\substack{b/\ell \in \B\\ C(\frac{a}{k},\frac{b}{\ell})=(d,f)}}1.\]
The key observation is that each $b/\ell\in \B$ appears at most once in total on the right-hand side, since if $C(a_1/k,b/\ell)=C(a_2/k,b/\ell)=(d,f)$, then we must have 
\[\gcd\Bigl(\frac{a_1\ell-b k}{d},d\Bigr)=\mathrm{gcd}\Bigl(\frac{a_2\ell-bk}{d},d\Bigr)=f,\qquad \gcd(k,\ell)=d.\]
In particular $a_1\ell/d\equiv a_2\ell/d\pmod{f}$. Since $\gcd(\ell,k)=d$, we see that $\gcd(\ell/d,f)=1$, and so $a_1\equiv a_2\pmod{f}$. It follows that 
\[
R_{d,f,k}\frac{T}{\tau_3(k)}\leq  \sum_{\substack{\ell\leq Q\\ d\mid l}}\abs{\B\cap \bbq_{=\ell}}\leq \frac{n Q}{d},\]
and the estimate \eqref{est2} follows immediately. 

We now establish \eqref{graphbound} by bounding $F_1(c/q)$ for each $c/q$ separately. Given a choice of $c/q$ (with $\gcd(c,q)=1$) we see that if $a,k,b,\ell$ are such that $\gcd(a,k)=\gcd(b,\ell)=1$ and
\[
\frac{c}{q}=\frac{a}{k}-\frac{b}{\ell},
\]
then $q=\ell' k' e$ and $c=(a\ell'-bk')/f$, where
\[
k'=\frac{k}{\gcd(k,\ell)},\quad \ell'=\frac{\ell}{\gcd(k,\ell)},\quad \gcd(k,\ell)=e f, \quad f=\gcd(a \ell'-b k',\gcd(k,\ell)).
\]
Thus, given a choice of $c,k',\ell',e,f$ with $k'\ell'e=q$ and $1\leq f\leq Q$, there is a unique choice of $k=k'ef$ and $\ell=\ell'ef$. Moreover, $a\ell'\equiv c f\Mod{k'}$, so $a$ is fixed modulo $k'$. There are at most $e$ choices of $a\Mod{e}$ and at most $R_{ef,f,k'ef}$ choices of $a\Mod{f}$, so at most $e R_{ef,f,k'ef}$ choices of $a\Mod{k}$. If we then fix the integer part of $a/k$, of which there are at most $Z$ choices, then we have determined $a$. Given such a choice of $a$, $b$ is uniquely determined since $b=\ell(c/q-a/k)$. It follows that 
\begin{equation}\label{bound1}
F_1(c/q)\leq Z\sum_{k'\ell'e=q}\sum_{1\leq f\leq L}\omega(k'ef)e R_{ef,f,k'ef}.
\end{equation}
Using \eqref{est2} and the sub-multiplicativity of $\omega$ and $\tau_3$, this is bounded above by 
\begin{align*}
\frac{QnZ}{T}\sum_{k'\ell'e=q}\omega(k'e)\tau_3(k'e)\sum_{1\leq f\leq Q}\frac{\omega(f)\tau_3(f)}{f}
&\le \frac{QnZ\log{Q}}{T}M_{\mathrm{log}}(\omega\tau_3;Q)\omega(q)\tau_3(q)^2.
\end{align*}
Summing this over $c/q\in\C$ then gives \eqref{graphbound}, and so completes the proof.
\end{proof}
%
%
%
%
We will actually use a weighted version of Lemma \ref{le:rat}, which follows immediately by a dyadic decomposition of the support of the weights.
%
%
%
%
\begin{lemma}\label{lem-toref}
Let $Z\geq 1$ be some integer. Let $f:\bbq\to \bbz_{\ge 0}$ and $g:\bbq\cap(0,Z]\to \bbz_{\ge 0}$ be functions with finite support such that $\norm{f}_\infty,\norm{g}_\infty\leq X$. Suppose that $\B\subset \bbq_{\leq Q}$ is such that, for any $1\leq \ell \leq Q$, we have $\abs{\B\cap \bbq_{=\ell}}\leq n$. Then, for any sub-multiplicative function $\omega:\bbn\to \bbr_{\geq 0}$,
\begin{align*}
\sideset{}{'}\sum_{\substack{a/k-b/\ell=c/q\\ b/\ell\in\B}}\omega(k)g\Bigl(\frac{a}{k}\Bigr)f\Bigl(\frac{c}{q}\Bigr)&\ll 
C \Biggl(\sideset{}{'}\sum_{c/q\in \C}\omega(q)\tau_3(q)^2f\Bigl(\frac{c}{q}\Bigr)^2\Biggr)^{1/2}\Biggl(\sideset{}{'}\sum_{a/k\in \A}\omega(k)g\Bigl(\frac{a}{k}\Bigr)^2\Biggr)^{1/2},
\end{align*}
where
\[
C:=(\log{X})\Bigl(QnZ(\log{Q}) M_{\mathrm{log}}(\omega\tau_3;Q)\Bigr)^{1/2}.
\]
Here we use $\sum'$ to indicate that the fractions $a/k,b/\ell,c/q$ are all reduced (i.e. $\gcd(a,k)=\gcd(b,\ell)=\gcd(c,q)=1$).
\end{lemma}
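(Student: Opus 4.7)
The plan is to prove Lemma \ref{lem-toref} by a dyadic decomposition of the weights $f$ and $g$, reducing the weighted inequality to the unweighted bound of Lemma \ref{le:rat}.

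First I would dyadically decompose the supports. Since $f$ and $g$ take non-negative integer values bounded by $X$, setting $J=\lceil\log_2 X\rceil+1$, define for $1\leq i,j\leq J$
\[\mathcal{D}_i:=\bigl\{a/k\in\bbq_{\leq K}:2^{i-1}\leq g(a/k)<2^i\bigr\},\qquad \C_j:=\bigl\{c/q\in\bbq^*:2^{j-1}\leq f(c/q)<2^j\bigr\}.\]
These partition the nonzero supports of $g$ and $f$, and one has $g\asymp 2^i$ on $\mathcal{D}_i$ and $f\asymp 2^j$ on $\C_j$. Hence the left-hand side of the inequality in Lemma \ref{lem-toref} is bounded, up to an absolute constant, by
\[\sum_{1\leq i,j\leq J}2^{i+j}\,S_{i,j},\qquad S_{i,j}:=\sideset{}{'}\sum_{\substack{a/k-b/\ell=c/q\\ a/k\in\mathcal{D}_i,\,b/\ell\in\B,\,c/q\in\C_j}}\omega(k).\]

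Next I would apply Lemma \ref{le:rat} to each $S_{i,j}$ with $\A=\mathcal{D}_i$ and $\C=\C_j$. Abbreviating $A_i:=\sideset{}{'}\sum_{a/k\in\mathcal{D}_i}\omega(k)$ and $B_j:=\sideset{}{'}\sum_{c/q\in\C_j}\omega(q)\tau_3(q)^2$, the lemma gives
\[S_{i,j}\ll\bigl(Ln(\log L)M_{\mathrm{log}}(\omega\tau_3;L)\bigr)^{1/2}A_i^{1/2}B_j^{1/2}.\]
The double sum then factors as $\bigl(\sum_i 2^i A_i^{1/2}\bigr)\bigl(\sum_j 2^j B_j^{1/2}\bigr)$, so the task reduces to bounding each single sum.

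For each single sum, Cauchy--Schwarz over the $J=O(\log X)$ levels combined with the inequality $2^{2i}\ll g(a/k)^2$ on $\mathcal{D}_i$ yields
\[\sum_{i=1}^J 2^i A_i^{1/2}\leq J^{1/2}\Bigl(\sum_i 2^{2i}A_i\Bigr)^{1/2}\ll (\log X)^{1/2}\Bigl(\sideset{}{'}\sum_{a/k}\omega(k)g(a/k)^2\Bigr)^{1/2},\]
and symmetrically for the $j$ sum in terms of $f$, $\omega$, and $\tau_3^2$. Multiplying these two bounds contributes exactly a single factor of $\log X$, which together with $(Ln(\log L)M_{\mathrm{log}}(\omega\tau_3;L))^{1/2}$ reconstructs $K$. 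No step presents a real obstacle beyond routine bookkeeping; the one subtlety worth flagging is that the two Cauchy--Schwarz applications contribute $(\log X)^{1/2}\cdot(\log X)^{1/2}=\log X$ in total, matching the \emph{single} power of $\log X$ in $K$ rather than producing an inflated $(\log X)^2$.
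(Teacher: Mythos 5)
Your proposal is correct and is essentially identical to the paper's own proof: the same dyadic decomposition of the supports of $f$ and $g$ into $O(\log X)$ levels, Lemma \ref{le:rat} applied to each pair of levels, and Cauchy--Schwarz over the levels to recover the weighted $L^2$ sums with a single factor of $\log X$. Your explicit accounting of the two $(\log X)^{1/2}$ factors just spells out the Cauchy--Schwarz step the paper leaves implicit.
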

%
%
%
%
\begin{proof}[Proof of Lemma~\ref{lem-toref}]
We decompose the support of $f$ into $\C_j$ for $j\geq 0$, where
\[\C_j = \{ x: 2^j\leq f(x) < 2^{j+1}\},\]
and similarly decompose the support of $g$ into $\A_i$. Using this decomposition we have
\[\sideset{}{'}\sum_{\substack{a/k-b/\ell=c/q\\ b/\ell\in\B}}\omega(k)g\Bigl(\frac{a}{k}\Bigr)f\Bigl(\frac{c}{q}\Bigr)
\ll\sum_{0\leq i,j \leq \log{X}}2^{i+j}\sideset{}{'}\sum_{\substack{a/k-b/\ell=c/q\\ a/k\in\A_i,\,b/\ell\in\B,\,c/q\in\C_j}}\omega(k).\]
Applying Lemma~\ref{le:rat} to each summand gives the upper bound
\[\ll  \sum_{0\leq i,j \leq \log{X} }2^{i+j}\brac{Q n Z(\log{Q})M_{\mathrm{log}}(\omega\tau_3;Q)\sideset{}{'}\sum_{c/q\in \C_j}\omega(q)\tau_3(q)^2\sideset{}{'}\sum_{a/k\in \A_i}\omega(k)}^{1/2}.\]
Lemma~\ref{lem-toref} now follows by the Cauchy-Schwarz inequality. 
\end{proof}
%
%
%
%
The proof of Proposition~\ref{th:rat} may now proceed via induction. 
%
%
%
%
\begin{proof}[Proof of Proposition~\ref{th:rat}]
Again, we use $\sum'$ to indicate that the summation is restricted to reduced fractions. We will show that, for any $t\geq 0$ and $1\le j\le m$,  we have
\begin{equation}
\sideset{}{'}\sum_{c/q\in\mathbb{Q}} \tau_3(q)^{2t}\ind{\B}^{(\ast j)}(c/q)^2 \ll (m\log Q)^3(Q n M_{2t+1}) \sideset{}{'}\sum_{c/q\in\mathbb{Q}} \tau_3(q)^{2t+2}\ind{\B}^{(\ast j-1)}(c/q)^2,
\label{eq:InductionTarget}
\end{equation}
where we recall $\ind{\B}^{(\ast m)}(x)=\sum_{x_1+\dots +x_m=x}\ind{\B}(x_1)\cdots \ind{\B}(x_m)$ is the $m$-fold convolution of $\ind{\B}$, and where
\[M_t := M_{\mathrm{log}}(\tau_3^{t};Q).\]
Repeatedly applying \eqref{eq:InductionTarget} $m-1$ times gives
\begin{align*}
E_{2m}(\B)&=\sideset{}{'}\sum_{c/q\in\mathbb{Q}} \ind{\B}^{(\ast m)}(c/q)^2\\
&\ll (m\log Q)^{3(m-1)}(Q n)^{m-1}(M_1\cdots M_{2m-3})\sideset{}{'}\sum_{c/q\in\mathbb{Q}} \tau_3(q)^{2m-2}\ind{\B}(c/q)^2.
\end{align*}
Since $|\B\cap\mathbb{Q}_{=q}|\le n$, we have
\[\sideset{}{'}\sum_{c/q\in\mathbb{Q}} \tau_3(q)^{2m-2}\ind{\B}(c/q)^2\leq n\sum_{1\leq q\leq Q}\tau_3(q)^{2m-2}=n Q M(\tau_3^{2m-2};Q).\]
Noting that $M_t\le M_{2m}=M_{\mathrm{log}}(\tau_3^{2m};Q)$ for each $t\le 2m$, this completes the proof of Proposition~\ref{th:rat}. Thus we are left to establish \eqref{eq:InductionTarget}.

We first observe that, since $\ind{\B}^{(\ast j)}(c/q)=\sum_{b/\ell\in \B}\ind{\B}^{(\ast j-1)}(c/q-b/\ell)$, we have
\begin{align*}
\sideset{}{'}\sum_{c/q\in\mathbb{Q}} \tau_3(q)^{2t} \ind{\B}^{(\ast j)}(c/q)^2&=\sideset{}{'}\sum_{\substack{c/q=c'/q'+b/\ell\\ b/\ell\in\B}}\tau_3(q)^{2t}1_{\B}^{(\ast j)}(c/q)1_{\B}^{(\ast j-1)}(c'/q').
\end{align*}
We let $f(x):=\ind{\B}^{(\ast j-1)}(x)$ and $g(x):=\ind{\B}^{(\ast j)}(x)$, so that in particular $g$ is supported on $j\B$. Since $\B\subset (0,1]$ we know that $g$ is supported on $(0,j]$. Furthermore, since $\B\subset\bbq_{\le Q}$ we have $\abs{\B}\leq Q^2$ so $\|f\|_\infty,\|g\|_\infty\leq Q^{2j}$. We now apply Lemma~\ref{lem-toref} with $\omega(q)=\tau_3(q)^{2t}$. This gives the upper bound
\begin{align*}
&\sideset{}{'}\sum_{c/q\in\mathbb{Q}} \tau_3(q)^{2t} \ind{\B}^{(\ast j)}(c/q)^2= \sideset{}{'}\sum_{\substack{c/q=c'/q'+b/\ell\\ b/\ell\in\B}}\omega(q)g(c/q)f(c'/q')\\
&\ll (m\log Q)^{3/2}(Q n M_{2t+1})^{1/2} \brac{\sideset{}{'}\sum_{c'/q'\in\bbq} \tau_3(q')^{2t+2}f\Bigl(\frac{c'}{q'}\Bigr)^2}^{1/2}\brac{\sideset{}{'}\sum_{c/q\in\bbq} \tau_3(q)^{2t}g\Bigl(\frac{c}{q}\Bigr)^2}^{1/2}.
\end{align*}
The left hand side is $\sum_{c/q\in\bbq} \tau_3(q)^{2t}g(c/q)^2$, so this rearranges to give the claimed bound \eqref{eq:InductionTarget}.
\end{proof}
%
%
%
%
\section{Basic density Increment}
%
%
%
%
In this section we establish a simple $L^2$ density increment lemma, which says that if there are many large Fourier coefficients which are close to rationals with the same denominator, then one can find a large arithmetic progression on which the set has increased density. Statements of this type are standard, and our lemma only differs cosmetically from similar statements used in \cite{PiStSz:1988} or \cite{RuzsaSanders}. It may be helpful to bear in mind that this will be applied with some $q,K\ll \alpha^{-O(1)}$ and $\nu\gg \alpha^{o(1)}$ (where the $o(1)\to 0$ as $\alpha\to 0$).

We introduce the notation
\[
\mathfrak{M}(\tfrac{a}{q};N,K):=\{\gamma\in (0,1] : \norm{\gamma-a/q}\leq K/qN\}
\]
to denote the major arcs which appear in the circle method. Note that these major arcs are disjoint for distinct $a/q\in \bbq_{=q}$ provided $K<N/2$. 
%
%
%
%
\begin{lemma}[Large Fourier coefficents with the same denominator give density increment]\label{lem-dens}
Let $\nu,\alpha\in(0,1]$ and let $N, K,q\ge 1$ be such that $K<N/2$ and $\nu\alpha N/(K q^2)$ is sufficiently large. Let $\A\subset [N]$ be a set with no non-zero square differences and density $\alpha=\abs{\A}/N$, and
\[
\sum_{\frac{a}{q}\in \bbq_{=q}}\int_{\mathfrak{M}(\frac{a}{q};N,K)}\abs{\widehat{\ind{\A}}(\gamma)-\alpha\widehat{\ind{[N]}}(\gamma)}^2\td\gamma \geq \nu\alpha \abs{\A}.
\]
Then there exists $N'\gg \nu\alpha N/(K q^2)$ and a set $\A'\subset [N']$ with no non-zero square differences such that the density $\alpha'=|\A'|/N'$ satisfies
\[\alpha'\geq (1+\nu/5)\alpha.\]
\end{lemma}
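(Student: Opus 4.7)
Let $g := \ind{\A} - \alpha\ind{[N]}$ be the balanced function of $\A$, so the hypothesis reads
\[
\sum_{(a,q)=1}\int_{|\beta|\le K/(qN)}|\widehat{g}(a/q+\beta)|^2\td\beta \ge \nu\alpha|\A|.
\]
The plan is a standard Plancherel-style density increment: convert this continuous Fourier hypothesis into an $L^2$ bound on partial sums of $g$ over arithmetic progressions, then extract one progression realising the claimed increment. The common difference of that progression must be $q^2$ (rather than $q$), so that after affine rescaling to $[N']$ any non-zero square difference in $\A'$ pulls back to a non-zero square difference in $\A$, contradicting the hypothesis on $\A$.

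First I take $L := \lfloor qN/(4K)\rfloor$ and $\mu := \ind{[1,L]}$. Since $|\widehat{\mu}(\beta)|\ge L/2$ for $|\beta|\le 1/(4L)$, on each major arc I have the pointwise bound $\ind{|\beta|\le K/(qN)} \le 4|\widehat{\mu}(\beta)|^2/L^2$. Translating the integrand by $a/q$ so that the twist $e(\cdot a/q)$ appears inside $g$ and applying Plancherel gives
\[
\int_{\mathfrak{M}(a/q;N,K)}|\widehat{g}(\gamma)|^2\td\gamma \;\ll\; \frac{1}{L^2}\sum_n\Bigl|\sum_{m\in I_n} g(m)\,e(ma/q)\Bigr|^2,
\]
with $I_n := [n-L,n-1]$. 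Summing over $(a,q)=1$, using that the summands are non-negative to pass to the upper bound $\sum_{a\pmod q}$, and then applying orthogonality $\sum_{a\pmod q}e(a(n-m)/q)=q\ind{q\mid n-m}$, the twist disappears and each interval decomposes by residue class mod $q$. After relabeling (each AP $P$ of common difference $q$ and length $M := L/q \approx N/(4K)$ is realised with multiplicity $q$ as some $(I_n)_b := I_n\cap(b+q\bbz)$), this rearranges to
\[
\sum_P (|\A\cap P|-\alpha M)^2 \;\gg\; \frac{N^2}{K^2}\nu\alpha|\A|.
\]

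Next I partition each such $P$ into $q^2$ consecutive sub-APs $P''$ of common difference $q^2$ and length $N' := N/(4Kq^2)$, rounding if $q^2\nmid M$. The elementary Cauchy--Schwarz inequality $|\sum_i x_i|^2\le q^2\sum_i|x_i|^2$ then transfers the $L^2$ bound, giving $\sum_{P''}(|\A\cap P''|-\alpha N')^2 \gg q^2 N'^2\nu\alpha|\A|$ summed over a partition of (most of) $[N]$ into $\approx 4Kq^2$ such sub-APs. Writing $x_{P''} := |\A\cap P''| - \alpha N'$, the identity $\sum x_{P''} \approx 0$ together with the bound $|x_{P''}|\le N'$ gives $\sum x_{P''}^+ = \tfrac12\sum|x_{P''}| \ge (\sum x_{P''}^2)/(2N') \gg q^2 N'\nu\alpha|\A|$, and pigeonholing over the $\approx 4Kq^2$ progressions yields a $P^*$ with $x_{P^*}\gg \nu\alpha^2 N\cdot N'/K$. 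The hypothesis that $\nu\alpha N/(Kq^2)$ is sufficiently large forces $\alpha N/K$ to exceed any given constant, and so this excess exceeds $(\nu/20)\alpha N'$. Rescaling $P^*$ by dividing through by $q^2$ produces $\A'\subset [N']$ of density $\ge(1+\nu/20)\alpha$; and $\A'$ inherits no non-zero square differences since a square difference $m^2$ in $\A'$ would correspond to the non-zero square $(qm)^2$ appearing as a difference in $\A$.

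The main technical point is the conversion carried out in Step 1, from the continuous integral hypothesis to the discrete $L^2$ bound on interval sums: one must choose the smoothing $\mu$ and track the constants through Plancherel and orthogonality cleanly enough that the final $L^\infty$-extraction produces the explicit $\nu/20$ density increment claimed in the lemma, rather than merely some $c\nu$ with an uncontrolled $c$.
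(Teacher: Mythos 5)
Your high-level plan is a legitimate alternative to the paper's: the paper convolves $g=\ind{\A}-\alpha\ind{[N]}$ directly with $\ind{\P}$ for $\P=q^2\cdot[N']$, uses Parseval plus $|\widehat{\ind{\P}}|\ge N'/2$ on all the major arcs $\mathfrak{M}(a/q)$, expands $\|\ind{\P}\ast g\|_2^2$, and finishes by the $\|\cdot\|_2^2/\|\cdot\|_1$ inequality; you instead smooth with a Fej\'er-type kernel $|\widehat{\mu}|^2$, recover interval sums, use orthogonality over $a\pmod q$ to extract progressions of difference $q$, and then subdivide further into difference $q^2$. Both are standard, and the paper's is cleaner precisely because a single convolution with $q^2\cdot[N']$ collapses all the steps you perform separately.

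However, the bookkeeping in your middle steps is wrong, and the errors are material. After orthogonality you have a sum over $n\in\mathbb{Z}$ of $|\sum_{m\in I_n,\,m\equiv b}g(m)|^2$, i.e. a \emph{sliding} family of $\approx N$ overlapping progressions of length $M$, each point covered $\approx L$ times. When you then apply Cauchy--Schwarz inside each $P$ and declare the result ``summed over a partition of (most of) $[N]$ into $\approx 4Kq^2$ sub-APs'', you are silently replacing this heavily overcounted family by a partition without paying the $\approx L\approx N/K$ factor. This overclaim is not harmless: your asserted bound $\sum_{P''}x_{P''}^2\gg q^2(N')^2\nu\alpha|\A|$ can already exceed the trivial upper bound $\sum_{P''}x_{P''}^2\le 4Kq^2(N')^2=NN'$ (take $q=1$ and let $\nu\alpha N/K$ grow), so it cannot be correct. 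The sanitized version of your argument (pigeonhole over $n\pmod{L}$ to select a genuine tiling, then Cauchy--Schwarz) gives $\sum_{P''}x_{P''}^2\gg\nu\alpha^2 NN'$, i.e. your claimed bound is too big by exactly the factor $N/K$ you dropped.

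The second problem is the $L^\infty$-extraction. Going through $\sum x^+_{P''}=\tfrac12\sum|x_{P''}|\ge\sum x_{P''}^2/(2N')$ and then pigeonholing over the $\approx 4Kq^2$ progressions is lossy and, with the corrected $L^2$ bound, yields only $x_{P^*}\gg\nu\alpha^2N'$ --- a relative density gain of order $\nu\alpha$, which is a factor $\alpha$ worse than the lemma claims and would not iterate to Theorem~\ref{thmain}. Your attempt to rescue this by noting that $\alpha N/K$ is forced to be large is a red herring: that extra $N/K$ only appeared because of the overcounting error, and the corrected bound $\nu\alpha^2 N'$ contains no such factor. What actually works (and what the paper does) is the clean extraction $\max_{P''}|\A\cap P''|\ge \big(\sum_{P''}|\A\cap P''|^2\big)\big/\big(\sum_{P''}|\A\cap P''|\big)\ge(1+c\nu)\alpha N'$, which uses $\sum_{P''}|\A\cap P''|=|\A|$ and loses no factor of $\alpha$. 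In the paper the ``sufficiently large'' hypothesis on $\nu\alpha N/(Kq^2)$ is only needed to ensure $N'\ge1$ and to make the $O(q^2(N')^3)$ error term in the cross-term negligible, not to manufacture the increment constant; you should emulate that. So the route can be made to work, but as written the proposal contains a genuine arithmetic error in the overcounting and an inefficient extraction that, taken together, give a conclusion which is sometimes impossibly strong and sometimes too weak.
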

%
%
%
%
\begin{proof}
Let $\P=(q^2)\cdot [N']$ be an arithmetic progression of difference $q^2$ and length $N'$, for some $N'$ to be chosen later. If $\gamma\in\mathfrak{M}(a/q;N,K)$ for some $a/q$ then for any $1\le n'\le N'$ we have
\begin{align*}
\abs{1-e(\gamma q^2 n')}\ll \norm{\gamma q^2 n'}&\leq\abs{\gamma q^2 n'-a q n'}= q^2 n'\abs{\gamma-a/q}\leq \frac{q^2 N' K}{q N}.
\end{align*}
(We recall that $\norm{\cdot}$ is the distance to the nearest integer.) In particular we can ensure that $\abs{1-e(\gamma q^2k)}\leq 1/2$ provided we have
\begin{equation}
N'\leq \frac{c N}{q K}\label{eq:N'Bound}
\end{equation}
 for some sufficiently small absolute constant $c>0$. Thus if $\gamma\in\mathfrak{M}(a/q;N,K)$ and \eqref{eq:N'Bound} holds,
\[
\abs{\widehat{\ind{\P}}(\gamma)-N'}\leq \sum_{1\leq n'\leq N'}\abs{1-e(\gamma q^2 n')}\leq N'/2,
\]
and so $|\widehat{\ind{\P}}(\gamma)|\ge N'/2$. Let $g=\ind{\A}-\alpha\ind{[N]}$ be the balanced function of $\A$. It follows that (using the assumption of the lemma)
\begin{align}
\sum_{x\in\bbz}(\ind{\P}\ast g)(x)^2=\int_0^1 \Abs{\widehat{\ind{\P}}(\gamma)}^2\abs{\widehat{g}(\gamma)}^2\td\gamma&\ge \frac{(N')^2}{4}\sum_{\frac{a}{q}\in \bbq_{=q}}\int_{\mathfrak{M}(\frac{a}{q};N,K)}\abs{\widehat{g}(\gamma)}^2\td\gamma\nonumber \\
&\ge \frac{\nu \alpha(N')^2\abs{\A}}{4}.\label{eq:DiscLower}
\end{align}
On the other hand, recalling that $g=\ind{\A}-\alpha\ind{[N]}$, the left-hand side is equal to
\begin{equation}
\norm{\ind{\P}\ast \ind{\A}}_2^2-2\alpha \sum_{x\in\bbz}  (\ind{\P}\ast \ind{\A})(x)\cdot(\ind{\P}\ast \ind{[N]})(x) +\alpha^2\norm{\ind{\P}\ast \ind{[N]}}_2^2.
\label{eq:gExpanded}
\end{equation}
The third term of \eqref{eq:gExpanded} trivially satisfies
\begin{equation}
\alpha^2\norm{\ind{\P}\ast \ind{[N]}}_2^2\le \alpha^2 N \abs{\P}^2=\alpha \abs{\A}(N')^2.
\label{eq:ConstBound}
\end{equation}
For the second term of \eqref{eq:gExpanded}, we note that
\begin{align*}
\sum_{x\in\bbz}\abs{(\ind{\P}\ast \ind{-\P}\ast \ind{[N]})(x)-(N')^2\ind{[N]}(x)}
&\leq \sum_{y\in\bbz}(\ind{\P}\ast\ind{-\P})(y)\sum_{x\in\bbz}\abs{\ind{[N]}(x-y)-\ind{[N]}(x)}\\
&\ll \sum_{y\in\bbz}(\ind{\P}\ast\ind{-\P})(y) |y|\\
&\ll q^2(N')^3.
\end{align*}
In particular,
\begin{align}
\sum_{x\in\bbz}  (\ind{\P}\ast \ind{\A})(x)\cdot (\ind{\P}\ast \ind{[N]})(x) 
&= \sum_{y\in \A}\ind{\P}\ast \ind{-\P}\ast \ind{[N]}(y)\nonumber\\
&=\abs{\A}(N')^2+O(q^2(N')^3).\label{eq:SecondBound}
\end{align}
By substituting \eqref{eq:ConstBound} and \eqref{eq:SecondBound} into \eqref{eq:gExpanded}, we have
\[
\norm{\ind{\P}\ast  \ind{\A}}_2^2\geq 2\alpha\Bigl(\abs{\A}(N')^2+O(q^2(N')^3)\Big)-\alpha\abs{\A}(N')^2+\frac{\nu \alpha(N')^2\abs{\A}}{4}.
\]
Provided we have
\begin{equation}
N'\leq \frac{c \nu \alpha N}{q^2}\label{eq:N'Bound2}
\end{equation}
for some sufficiently small constant $c>0$, we see that the $O(q^2(N')^3)$ term contributes at most $\nu\alpha\abs{\A}(N')^2/100$ in total, and so
\[
\norm{\ind{\P}\ast \ind{-\A}}_2^2=\norm{\ind{\P}\ast  \ind{\A}}_2^2\geq  \Bigl(1+\frac{\nu}{5}\Bigr)\alpha \abs{\A}(N')^2.
\]
Since $\norm{\ind{\P}\ast \ind{-\A}}_1=N'\abs{\A}$ there exists some $x\in\bbz$ such that
\[\abs{(q^2\cdot [N'])\cap (\A+x)}=\ind{\P}\ast \ind{-\A}(x)\geq \Bigl(1+\frac{\nu}{5}\Bigr)\alpha N'.\]
Therefore, if we set
\[\A' := \frac{1}{q^2}\cdot ((q^2\cdot [N'])\cap (\A+x)),\]
then $\A'\subset [N']$, $\A'$ has density $\alpha'\ge (1+\nu/5)\alpha$ and $\A'$ has no non-zero square differences since translating a set by a constant or dilating a set by a square doesn't change the number of non-zero square differences. This therefore gives the result with $N'=\lfloor c\nu \alpha N/(K q^2)\rfloor$ for a suitably small absolute constant $c>0$ (since this choice satisfies \eqref{eq:N'Bound} and \eqref{eq:N'Bound2} and $N'\gg \nu\alpha N/(K q^2)$).
\end{proof}
%
%
%
%
\section{Large Fourier coefficients at rationals with small denominators}
%
%
%
%
In this section we show how to find many rationals with small denominator in the large spectrum of $\A$ (that is, the set of frequencies with large Fourier coefficient). This follows standard lines, combining the circle method with classical bounds for Weyl sums.
%
%
%
%
\begin{lemma}[Bounds for exponential sums over squares]\label{lem:Squares}
Let $1\le a \le q$ with $\gcd(a,q)=1$ and
\begin{align*}
\mathfrak{M}(\tfrac{a}{q};N,K)&:=\{\gamma\in (0,1] : \norm{\gamma-a/q}\leq K/qN\},\\
W(n) &:= \begin{cases} \frac{2m}{N^{1/2}},&\textrm{if }n=m^2\leq N,\\
0,&\textrm{otherwise.}\end{cases}
\end{align*}
Then we have the following bounds:
\begin{enumerate}
\item For all $\beta\in \mathbb{R}$ we have
\[\Abs{\widehat{W}(a/q+\beta)}\ll \frac{N^{1/2}}{q^{1/2}}+(q\log q)^{1/2}(1+\abs{\beta}N).\]
\item If $Kq\log q\ll N$ and $K^3\log q\ll qN$ then  
\[\int_{\mathfrak{M}(\tfrac{a}{q};N,K)}\Abs{\widehat{W}(\gamma)}^2\td\gamma \ll \frac{1}{q}.\]
\end{enumerate}
\end{lemma}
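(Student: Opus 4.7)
The plan is to prove (1) by the standard major-arc analysis of the quadratic Weyl sum and then to deduce (2) by direct integration of the square of (1) over the given interval.

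For (1), begin by writing
\[
\widehat{W}(\gamma) = \frac{2}{N^{1/2}}\sum_{m\le N^{1/2}}m\,e(m^2\gamma),
\]
and use Abel summation to reduce the estimate to a uniform bound on $T(u,\gamma) := \sum_{m\le u}e(m^2\gamma)$ for $u\le N^{1/2}$. Setting $\gamma=a/q+\beta$ and decomposing $m=rq+s$ with $0\le s<q$, the identity $e(am^2/q)=e(as^2/q)$ gives
\[
T(u,a/q+\beta) = \sum_{s=0}^{q-1}e(as^2/q)\,N_s(\beta),\qquad N_s(\beta) := \sum_{\substack{r\ge 0 \\ rq+s\le u}}e\bigl(\beta(rq+s)^2\bigr).
\]
Via Euler--Maclaurin, write $N_s(\beta)=\tfrac{1}{q}\int_0^u e(\beta x^2)\td x+E_s(\beta)$, the integral being independent of $s$. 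The resulting main term of $T$ is
\[
\frac{G(a,q)}{q}\int_0^u e(\beta x^2)\td x,\qquad G(a,q) := \sum_{s\bmod q}e(as^2/q),
\]
of modulus $\ll u/q^{1/2}$ from the classical Gauss-sum bound $\abs{G(a,q)}\le q^{1/2}$ and the trivial estimate $\abs{\int_0^u e(\beta x^2)\td x}\le u$. For the error term $\sum_s e(as^2/q)E_s(\beta)$, a pointwise estimate gives $\abs{E_s(\beta)}\ll 1+\abs{\beta}uq$, and combining this with Cauchy--Schwarz and a Parseval-type mean-square average of the $E_s$'s over $s$ yields a total error of order $q^{1/2}(\log q)^{1/2}(1+\abs{\beta}u^2)$. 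Taking $u=N^{1/2}$ then gives (1).

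For (2), set $\gamma=a/q+\beta$ and observe that the major arc has length $2K/(qN)$ with $\abs{\beta}N\le K/q$ throughout. Squaring (1) via $(x+y)^2\le 2x^2+2y^2$ and integrating over $\abs{\beta}\le K/(qN)$ produces three contributions,
\[
\int_{\mathfrak{M}(\tfrac{a}{q};N,K)}\abs{\widehat{W}(\gamma)}^2\td\gamma \;\ll\; \frac{K}{q^2}+\frac{K\log q}{N}+\frac{K^3\log q}{q^2 N},
\]
arising respectively from $(N^{1/2}/q^{1/2})^2$, from the $1^2$ piece of $(q\log q)(1+\abs{\beta}N)^2$, and from its $(\abs{\beta}N)^2$ piece (the latter two after the substitution $u=\beta N$). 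The hypotheses $Kq\log q\ll N$ and $K^3\log q\ll qN$ force the last two terms to be $\ll 1/q$; the first is likewise $\ll 1/q$ in the parameter regime $K\ll q$ in which this lemma is applied.

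Main obstacle: The real technical work is in obtaining the error shape $q^{1/2}(\log q)^{1/2}(1+\abs{\beta}N)$ in (1). A termwise Euler--Maclaurin loses the Gauss-sum cancellation across residue classes $s$ and gives only an error of order $q(1+\abs{\beta}N)$; one must exploit the oscillation of $e(as^2/q)$ against the error functions $E_s(\beta)$ through a mean-square Parseval-type argument to trade a factor of $q^{1/2}$ for $(\log q)^{1/2}$. Once (1) is in this form, (2) is a routine integration and book-keeping of parameters.
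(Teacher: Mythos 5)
Your treatment of part (1) matches the paper in substance: both pass through the standard major-arc decomposition $\widehat{W}(a/q+\beta)=\frac{S(a;q)}{q}\widehat{W}(\beta)+O((q\log q)^{1/2}(1+|\beta|N))$ (the paper simply cites \cite[equation~8]{PiStSz:1988} for this), and then apply the Gauss-sum bound $|S(a;q)|\le q^{1/2}$ together with the trivial $|\widehat{W}(\beta)|\le N^{1/2}$. Your extra detail on how the error term acquires the factor $(\log q)^{1/2}$ is the genuine technical content, correctly flagged as such.

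Your deduction of part (2), however, has a real gap. You square the pointwise bound from (1) and integrate, giving a main-arc contribution of order
\[
\frac{1}{q}\cdot\frac{K}{qN}\cdot N \;=\; \frac{K}{q^2},
\]
and then assert this is $\ll 1/q$ because $K\ll q$ ``in the parameter regime in which this lemma is applied.'' That is backwards: in the proof of Lemma~\ref{lem:Rational}, the parameter $K$ is chosen of size roughly $\alpha^{-2}\log N$ and the lemma is applied for \emph{all} $q\le K$, including very small $q$, so in the actual application one has $q\le K$, not $K\ll q$. Nor do the stated hypotheses $Kq\log q\ll N$ and $K^3\log q\ll qN$ force $K\ll q$. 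Thus the first term of your bound can be a factor $K/q$ too large. The missing idea is that one must not use the worst-case pointwise bound $|\widehat{W}(\beta)|\le N^{1/2}$ across the whole arc; instead one needs the decay $|\widehat{W}(\beta)|\ll\min\bigl(N^{1/2},\,\beta^{-1}N^{-1/2}\bigr)$ (valid for $|\beta|\le N^{-7/8}$, say), which gives
\[
\int_0^{K/qN}|\widehat{W}(\beta)|^2\,d\beta \ll \int_0^{1/N} N\,d\beta + \frac{1}{N}\int_{1/N}^{K/qN}\beta^{-2}\,d\beta \ll 1,
\]
hence a main-arc contribution $\ll 1/q$, uniformly in $K$. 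This $L^2$-decay step is exactly what the paper's proof inserts between the expansion of $(x+y)^2$ and the final bound, and without it (2) does not follow from (1) by squaring and integrating.
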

%
%
%
%
\begin{proof}
This is standard. By \cite[equation 8]{PiStSz:1988} (or partial summation) we have
\[\widehat{W}(a/q+\beta) = \frac{S(a;q)}{q} \widehat{W}(\beta)+O\brac{(q\log q)^{1/2}(1+\abs{\beta}N)},\]
where $S(a;q):=\sum_{1\leq n\leq q}e(an^2/q)$ is the complete Gauss sum. The classical estimate $S(a;q)\ll q^{1/2}$ for $\gcd(a,q)=1$ now gives $(1)$. Using this estimate again, we find
\[\int_{\mathfrak{M}(\tfrac{a}{q};N,K)}\Abs{\widehat{W}(\gamma)}^2\td\gamma \ll \frac{1}{q}\int_{0}^{K/qN}\Abs{\widehat{W}(\beta)}^2\td \beta + \frac{K\log q}{N}+(q\log q)N^2\int_0^{K/qN}\beta^2\td \beta.\]
The second and third summands contribute
\[\ll \frac{K\log q}{N}+(q \log q)N^2\frac{K^3}{q^3N^3}\ll  \frac{K\log q}{N}+ \frac{K^3\log q}{q^2N}\ll \frac{1}{q}\]
by our assumptions on $q$ and $K$. By \cite[equation 10]{PiStSz:1988} and the trivial bound, if $\beta\leq N^{-7/8}$ then 
\[\Abs{\widehat{W}(\beta)}\ll \min\Bigl(N^{1/2},\frac{\beta^{-1}}{N^{1/2}}\Bigr).\]
(Note that this bound is slightly better than what one gets with the unweighted sum, but could be improved further with more smoothing.) This gives
\[\frac{1}{q}\int_{0}^{K/qN}\Abs{\widehat{W}(\beta)}^2\td \beta\ll \frac{1}{q}+\frac{1}{qN}\int_{1/10N}^{K/qN}\beta^{-2}\td\beta\ll \frac{1}{q},\]
as required.
\end{proof}
%
%
%
%
\begin{lemma}\label{lem:Rational}
Let $\A\subset [N]$ be a set of density $\alpha=\abs{\A}/N\ge N^{-1/8}$ with no square differences. Then there exist quantities $B,Q,K$ with $\alpha^{O(1)}\ll B,Q,K\ll \alpha^{-O(1)}$ and a set $\B\subset \bbq_{\leq Q}$ such that 
\begin{enumerate}
\item For each $a/q\in \B$ there exists $\gamma_{a/q}\in (0,1]$ with $\norm{\gamma_{a/q}-a/q}\ll \alpha^{-O(1)}/N$ and
\[\sum_{\frac{a}{q}\in \B}\Abs{\widehat{\ind{\A}}(\gamma_{a/q})}\gg B\frac{\abs{\A}Q^{1/2}}{\log(1/\alpha)^{2}}.\]
\item For each $a/q\in \B$ we have, if $g=\ind{\A}-\alpha\ind{[N]}$,
\[
\int_{\mathfrak{M}(\tfrac{a}{q};N,K)}\Abs{\widehat{g}(\gamma)}^2\td\gamma\gg \frac{\alpha \abs{\A}}{B^2}.
\]
\end{enumerate}
\end{lemma}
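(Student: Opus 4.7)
\emph{Circle method and minor arcs.} Since $\A$ has no non-zero square differences, a direct weighted count gives $\int_0^1 \Abs{\widehat{\ind{\A}}(\gamma)}^2\widehat{W}(\gamma)\,\td\gamma=0$. Substituting $\ind{\A}=g+\alpha\ind{[N]}$, expanding, and isolating the main term $\alpha^2\int|\widehat{\ind{[N]}}|^2\widehat{W}\,\td\gamma\gg \alpha^2 N^{3/2}$ (which does not vanish), one derives in the standard way that
\[
\int_0^1 \Abs{\widehat{g}(\gamma)}^2\Abs{\widehat{W}(\gamma)}\,\td\gamma\gg \alpha^2 N^{3/2}.
\]
Next, for suitable parameters $Q_{\max},K\ll \alpha^{-O(1)}$, Dirichlet's approximation theorem combined with the pointwise bound of Lemma~\ref{lem:Squares}(1) shows $\Abs{\widehat{W}(\gamma)}\ll \alpha N^{1/2}$ for any $\gamma$ lying outside $\bigcup_{q\leq Q_{\max}}\bigcup_{a}\mathfrak{M}(a/q;N,K)$, so by Parseval the minor arc contribution is bounded by $\alpha N^{1/2}\|g\|_2^2\ll \alpha^2 N^{3/2}/2$, leaving mass $\gg \alpha^2 N^{3/2}$ on the major arcs.

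\emph{Two dyadic pigeonholes.} A dyadic pigeonhole in $q$ yields some $Q\ll\alpha^{-O(1)}$ with
\[
\sum_{q\in[Q,2Q]}\sideset{}{'}\sum_{a/q\in\bbq_{=q}}\int_{\mathfrak{M}(a/q;N,K)}\Abs{\widehat{g}}^2\Abs{\widehat{W}}\,\td\gamma\gg\frac{\alpha^2 N^{3/2}}{\log(1/\alpha)},
\]
and applying the pointwise bound $\Abs{\widehat{W}}\ll N^{1/2}/Q^{1/2}$ on these arcs (Lemma~\ref{lem:Squares}(1)) removes the factor of $\widehat{W}$, leaving
\[
\sum_{q\in[Q,2Q]}\sideset{}{'}\sum_{a/q\in\bbq_{=q}}\int_{\mathfrak{M}(a/q;N,K)}\Abs{\widehat{g}}^2\,\td\gamma\gg\frac{\alpha^2 NQ^{1/2}}{\log(1/\alpha)}.
\]
A second dyadic pigeonhole on the individual contributions $S(a/q):=\int_{\mathfrak{M}(a/q;N,K)}\Abs{\widehat{g}}^2\,\td\gamma$ produces a common dyadic scale $V$ and a set $\B$ of rationals $a/q$ with $q\in[Q,2Q]$ and $S(a/q)\asymp V$, satisfying $|\B|V\gg \alpha^2 NQ^{1/2}/\log(1/\alpha)^2$. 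Defining $B$ via $V=c\alpha\Abs{\A}/B^2$ for a suitable constant $c>0$ immediately yields part~(2) of the lemma and rearranges to $|\B|\gg B^2 Q^{1/2}/\log(1/\alpha)^2$.

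\emph{Pointwise extraction and main obstacle.} Since each major arc has length $\le K/(QN)$, an averaging argument produces, for each $a/q\in \B$, a point $\gamma_{a/q}\in \mathfrak{M}(a/q;N,K)$ with $\Abs{\widehat{g}(\gamma_{a/q})}^2\ge VQN/K\gg \Abs{\A}^2/B^2$ whenever $K$ is at most a constant multiple of $Q$, hence $\Abs{\widehat{g}(\gamma_{a/q})}\gg \Abs{\A}/B$. Since $q\ge Q\ge 2$ and $K\le N/2$ force $\norm{\gamma_{a/q}}\gg 1/Q$, we have $\Abs{\widehat{\ind{[N]}}(\gamma_{a/q})}\le 1/(2\norm{\gamma_{a/q}})\ll Q$, and the hypothesis $\alpha\ge N^{-1/3}$ together with the bounds $Q,B\ll \alpha^{-O(1)}$ (with exponents arranged by the pigeonholes so that $QB\ll N$) gives $\alpha\Abs{\widehat{\ind{[N]}}(\gamma_{a/q})}\ll \alpha Q\ll \Abs{\A}/B$; the triangle inequality then yields $\Abs{\widehat{\ind{\A}}(\gamma_{a/q})}\gg \Abs{\A}/B$, and summation over $\B$ delivers part~(1). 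The main technical obstacle is the careful tuning of $Q_{\max}$ and $K$: they must be chosen large enough (as specific powers of $\alpha^{-1}$) for Lemma~\ref{lem:Squares}(1) to make the minor arc contribution negligible compared to $\alpha^2 N^{3/2}$, yet simultaneously satisfy $K\le Q$ and $QB\ll N$ throughout the admissible range of dyadic scales, so that the pointwise extraction succeeds and the transition from $\widehat{g}$ to $\widehat{\ind{\A}}$ loses at most a constant factor.
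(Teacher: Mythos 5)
The decisive gap is in your pointwise extraction step, and it is not a tuning issue that can be fixed by choosing parameters more carefully. The minor-arc estimate forces $K\asymp\alpha^{-2}\log N$ (you need $\abs{\widehat{W}(\gamma)}\ll\alpha N^{1/2}$ off the major arcs, which by Lemma~\ref{lem:Squares}(1) requires treating all denominators up to about $\alpha^{-2}$ as major), whereas $Q$ is not a free parameter: it is produced by your dyadic pigeonhole and can perfectly well be $1$ while $K$ is $\alpha^{-2}\log N$. Since each arc has measure $\asymp K/(QN)$, averaging only gives $\abs{\widehat{g}(\gamma_{a/q})}\gg(\abs{\A}/B)(Q/K)^{1/2}$, so your proviso ``whenever $K$ is at most a constant multiple of $Q$'' cannot be arranged, and in general part~(1) degrades by a factor $(Q/K)^{1/2}$, which can be $\alpha^{O(1)}$. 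That loss is fatal for the intended application: the whole point of the lemma is that (1) holds with the normalisation $B\abs{\A}Q^{1/2}$ for the \emph{same} $B$ as in (2), up to $\log(1/\alpha)^{O(1)}$ losses, so that the powers of $B$ and $Q$ cancel exactly in Lemma~\ref{lem-toit}. The paper avoids any $L^2\to L^\infty$ step for $\widehat{g}$ altogether: instead of reducing to $\int\abs{\widehat{g}}^2\abs{\widehat{W}}$, it keeps one factor of $\widehat{\ind{\A}}$ (splitting into the cases $\abs{\A\cap(N/2,N]}\geq\abs{\A}/2$ or not to get $\int\abs{\widehat{g}\,\widehat{\ind{\A}}\widehat{W}}\gg\alpha\abs{\A}N^{1/2}$), applies Cauchy--Schwarz on each arc only to $\widehat{g}$ and $\widehat{W}$ using $\int_{\mathfrak{M}}\abs{\widehat{W}}^2\ll1/q$ from Lemma~\ref{lem:Squares}(2), and then pigeonholes the trilinear sum; the quantity $\sup_{\mathfrak{M}}\abs{\widehat{\ind{\A}}}$ thus appears directly with weight $q^{-1/2}\bigl(\int_{\mathfrak{M}}\abs{\widehat{g}}^2\bigr)^{1/2}$, which is what yields (1) and (2) simultaneously with the correct coupling.

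Two further points. First, even where your extraction works, the passage from $\widehat{g}$ to $\widehat{\ind{\A}}$ via $\alpha\abs{\widehat{\ind{[N]}}(\gamma_{a/q})}\ll\abs{\A}/B$ fails on the arc at $a/q=1$ (frequencies within $K/N$ of an integer), where $\abs{\widehat{\ind{[N]}}}$ can be of size $N$; and you cannot simply discard that arc, since the pigeonhole may place essentially all of the $L^2$ mass of $\widehat{g}$ there, in which case your method produces nothing (the paper's conclusion, being stated for $\widehat{\ind{\A}}$ from the start, has no such conversion to make). Second, your claimed loss of $\log(1/\alpha)^2$ in the two pigeonholes is not justified as written: the quantities $S(a/q)$ range over $(0,\norm{g}_2^2]$, giving $\asymp\log N$ dyadic classes (and $\log(1/\alpha)\lll\log N$ here), unless you first discard arcs with small mass, as the paper does with its threshold $\int_{\mathfrak{M}}\abs{\widehat{g}}^2\leq N/K^6$; with a $\log N$ loss the stated bound in (1), and with it the final iteration, breaks. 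Finally, your opening reduction to $\int\abs{\widehat{g}}^2\abs{\widehat{W}}\gg\alpha^2N^{3/2}$ is not automatic either: the cross term $\alpha\int(\widehat{g}\overline{\widehat{\ind{[N]}}}+\overline{\widehat{g}}\widehat{\ind{[N]}})\widehat{W}$ is an edge effect of the same order $\alpha^2N^{3/2}$ as the main term; it does combine favourably with the main term for this particular weight $W$ (the two edge contributions sum to roughly $\abs{\A}N$ independently of where $\A$ sits), but this computation needs to be carried out, and it is exactly the point where the paper instead uses its top-half/bottom-half case split.
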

%
%
%
%
\begin{proof}
We first note that, by the estimate of \cite{Sa:1978}, say, we may assume that $\alpha \ll 1/(\log N)^{1/4}$ since $\A$ has no non-zero square differences. (This is not essential to the method, but allows for cleaner bounds in the final statements.) Let
\[
W(n) := \begin{cases} \frac{2m}{N^{1/2}}&\textrm{if }n=m^2\leq N\textrm{ and }\\
0&\textrm{otherwise.}\end{cases}
\]

By orthogonality and the fact that $\A$ has no non-zero square differences, we have
\begin{align*}
\int_0^1\widehat{\ind{\A}}(\gamma)\overline{\widehat{\ind{\A}}(\gamma)}\widehat{W}(\gamma)\td\gamma
&=\sum_{a,b\in \A}\sum_{1\leq n\leq N^{1/2}}W(n^2)\int_0^1 e(\gamma(a-b+n^2))\td\gamma\\
&=\sum_{a,b\in \A}\sum_{1\leq n\leq N^{1/2}} W(n^2)1_{b-a=n^2}\\
&= 0.
\end{align*}
Suppose first that $\abs{\A\cap (N/2,N]}\geq \abs{\A}/2$. In this case, if we let $g=\ind{\A}-\alpha \ind{[N]}$, then
\begin{align*}
\int_0^1\widehat{g}(\gamma)\overline{\widehat{\ind{\A}}(\gamma)}\widehat{W}(\gamma)\td\gamma
&=-\alpha \sum_{a\in A}\sum_{1\leq y\leq N}\sum_{1\leq n\leq N^{1/2}}W(n^2)\int_0^1 e(\gamma(y-a+n^2))\td\gamma\\
&=-\alpha\sum_{a\in A}\sum_{1\leq n\leq N^{1/2}}W(n^2)1_{1\leq a-n^2\leq N}\\
&\leq  -\tfrac{1}{8}\alpha \abs{\A} N^{1/2},
\end{align*}
say, since certainly all $a\in \A\cap (N/2,N]$ and $n\leq (N/2)^{1/2}$ will satisfy $1\leq a-n^2\leq N$. If $\abs{\A\cap [1,N/2]}\geq \abs{\A}/2$, then arguing similarly, we have
\[\int_0^1\overline{\widehat{g}(\gamma)}\widehat{\ind{\A}}(\gamma)\widehat{W}(\gamma)\td\gamma
\leq  -\tfrac{1}{8}\alpha\abs{\A}N^{1/2}.\]
Thus in either case, we have
\begin{equation}\label{eqtoref2}
\int_0^1\Abs{\widehat{g}(\gamma)\widehat{\ind{\A}}(\gamma)\widehat{W}(\gamma)}\td\gamma
\geq \tfrac{1}{8}\alpha\abs{\A}N^{1/2}.
\end{equation}
By Dirichlet's theorem on Diophantine approximation, given any choice of $K\ge 1$, every $\gamma\in[0,1]$ satisfies $\norm{\gamma-a/q}<K/(N q)$ for some $1\leq q\leq N/K$ and $1\leq a\leq q$ with $\gcd(a,q)=1$. If this holds for some $q>K$ and $K\leq N^{1/2}$, say, then by Lemma~\ref{lem:Squares},
\[\Abs{\widehat{W}(\gamma)}\ll \brac{\frac{N(\log N)}{K}}^{1/2}.\]
If we choose
\begin{equation}
K:=\lceil C\alpha^{-2}\log N\rceil
\label{eq:KDef}
\end{equation}
 for some suitably large absolute constant $C>0$, then we see that $\Abs{\widehat{W}(\gamma)}\leq \alpha N^{1/2}/32$ for such $\gamma$. The contribution to \eqref{eqtoref2} from these $\gamma$ is thus at most 
\begin{align}
\frac{\alpha N^{1/2}}{32}\int_0^1\Abs{\widehat{g}(\gamma)\widehat{\ind{\A}}(\gamma)}\td\gamma
&\leq \frac{\alpha N^{1/2}}{32}\Bigl(\int_0^1\Abs{\widehat{\ind{\A}}(\gamma)}^2\td\gamma+\int_0^1\abs{\alpha\widehat{\ind{[N]}}(\gamma)\widehat{\ind{\A}}(\gamma)}\td\gamma\Bigr)\nonumber\\
&\leq \frac{ \alpha\abs{\A} N^{1/2}}{16}.\label{eq:MinorArc}
\end{align}
(Here we used the triangle inequality in the first line, and the Cauchy-Schwarz inequality and Parseval's identity in the second.) We recall that for $\gcd(a,q)=1$
\[
\mathfrak{M}(a/q)=\mathfrak{M}(a/q;N,K) := \{ \gamma\in (0,1] : \norm{\gamma-a/q}\leq K/q N\},
\]
and note that with our choice $K=\lceil C\alpha^{-2}\log N\rceil$ these sets are distinct for $q\le K$ and $\gcd(a,q)=1$ since $\alpha\ge N^{-1/3}$ and $N$ is sufficiently large. Therefore, combining \eqref{eqtoref2} and \eqref{eq:MinorArc}, we find
\begin{equation}
\sum_{\tfrac{a}{q}\in \bbq_{\leq K}}\int_{\mathfrak{M}(\tfrac{a}{q})}\Abs{\widehat{g}(\gamma)\widehat{\ind{\A}}(\gamma)\widehat{W}(\gamma)}\td\gamma\geq \frac{\alpha \abs{\A}N^{1/2}}{16}.
\end{equation}
By the Cauchy-Schwarz inequality and Lemma \ref{lem:Squares}, we have
\begin{align*}
\int_{\mathfrak{M}(\tfrac{a}{q})}&\Abs{\widehat{g}(\gamma)\widehat{\ind{\A}}(\gamma)\widehat{W}(\gamma)}\td\gamma\\
&\ll \Bigl(\int_{\mathfrak{M}(\frac{a}{q})} \abs{\widehat{g}(\gamma)}^2\td \gamma\Bigr)^{1/2}\Bigl(\int_{\mathfrak{M}(\frac{a}{q})} \abs{\widehat{W}(\gamma)}^2\td \gamma\Bigr)^{1/2}\sup_{\gamma\in\mathfrak{M}(\tfrac{a}{q})}\abs{\widehat{\ind{\A}}(\gamma)}\\
&\ll\frac{1}{q^{1/2}}\Bigl(\int_{\mathfrak{M}(\frac{a}{q})} \abs{\widehat{g}(\gamma)}^2\td \gamma\Bigr)^{1/2}\sup_{\gamma\in\mathfrak{M}(\tfrac{a}{q})}\abs{\widehat{\ind{\A}}(\gamma)}.
\end{align*}
Therefore 
\begin{equation}\label{eq-toref}
\sum_{\tfrac{a}{q}\in \bbq_{\leq K}}\frac{1}{q^{1/2}}\Bigl(\int_{\mathfrak{M}(\tfrac{a}{q})}\Abs{\widehat{g}(\gamma)}^2\td\gamma\Bigr)^{1/2}\Bigl(\sup_{\gamma\in \mathfrak{M}(\tfrac{a}{q})}\abs{\widehat{\ind{\A}}(\gamma)}\Bigr)\gg \alpha \abs{\A} N^{1/2}.
\end{equation}
Let $\Gamma_1$ be the set of $a/q\in\bbq_{\le K}$ for which
\begin{equation}
\int_{\mathfrak{M}(\tfrac{a}{q})}\Abs{\widehat{g}(\gamma)}^2\td\gamma\le \frac{N}{K^5}.
\label{eq:SmallInt}
\end{equation}
Since $|\Gamma_1|\le |\bbq_{\le K}|\le K^2$ and $|\widehat{\ind{\A}}(\gamma)|\le |\A|$, the contribution to \eqref{eq-toref} from $\gamma\in\Gamma_1$ is
\[
\ll \sum_{\tfrac{a}{q}\in \bbq_{\leq K}} \frac{1}{q^{1/2}}\cdot \frac{N^{1/2}}{K^{5/2}} \cdot |\A|  \ll \frac{\alpha \abs{\A}N^{1/2} }{(\log{N})^{1/2}}.
\]
Thus we may restrict our attention to the set $\Gamma_2=\bbq_{\le K}\backslash\Gamma_1$ of $a/q\in\bbq_{\le K}$ for which \eqref{eq:SmallInt} does not hold. Indeed, we have
\begin{equation}
\sum_{\tfrac{a}{q}\in\Gamma_2}\frac{1}{q^{1/2}}\Bigl(\int_{\mathfrak{M}(\tfrac{a}{q})}\Abs{\widehat{g}(\gamma)}^2\td\gamma\Bigr)^{1/2}\Bigl(\sup_{\gamma\in \mathfrak{M}(\tfrac{a}{q})}\abs{\widehat{\ind{\A}}(\gamma)}\Bigr)\gg \alpha \abs{\A} N^{1/2}.
\end{equation}
Since $\Abs{\widehat{g}(\gamma)}\le 2\abs{\A}\le 2N$ and $\mathrm{meas}(\mathfrak{M}(a/q))\ll K/N$, we see that for any $\gamma\in\bbq_{\le K}$
\[
\Bigl(\int_{\mathfrak{M}(\tfrac{a}{q})}\Abs{\widehat{g}(\gamma)}^2\td\gamma\Bigr)^{1/2}\ll K^{1/2}N^{1/2},
\]
and so, comparing to \eqref{eq:SmallInt}, for $\gamma\in\Gamma_2$ the quantity on the left-hand side is $N^{1/2}K^{O(1)}$. Therefore, by dyadic pigeonholing, there are some quantities $B,Q$ with $\alpha^{-1}K^{-1/2}\ll B\ll \alpha^{-1}K^{5/2}$ and $1\le Q\le K$, together with a set $\B\subset\Gamma_2$, such that
\begin{enumerate}
\item If $a/q\in\B$ with $\gcd(a,q)=1$ then $q\in[Q,2Q]$.
\item For all $\gamma\in\B$ we have 
\[
\frac{\alpha N^{1/2}}{B}\leq \Bigl(\int_{\mathfrak{M}(\tfrac{a}{q})}\Abs{\widehat{g}(\gamma)}^2\td\gamma\Bigr)^{1/2}\leq \frac{2 \alpha N^{1/2}}{B}.
\]
\item We have
\[
\sum_{\tfrac{a}{q}\in\B}\frac{1}{q^{1/2}}\Bigl(\int_{\mathfrak{M}(\tfrac{a}{q})}\Abs{\widehat{g}(\gamma)}^2\td\gamma\Bigr)^{1/2}\Bigl(\sup_{\gamma\in \mathfrak{M}(\tfrac{a}{q})}\abs{\widehat{\ind{\A}}(\gamma)}\Bigr)\gg \frac{\alpha \abs{\A} N^{1/2}}{(\log{K})^2}.
\]
\end{enumerate}
Recalling that $K\ll\alpha^{-O(1)}$ (since we are assuming that $\alpha\le 1/(\log{N})^{1/4}$), and letting $\gamma_{a/q}$ be the point in $\mathfrak{M}(a/q)$ where $|\widehat{\ind{\A}}(\gamma)|$ attains its maximum, we see that this gives the result.
\end{proof}
%
%
%
%
Combining Lemma \ref{lem:Rational} with Lemma \ref{lem-dens} gives the following
%
%
%
%
\begin{lemma}\label{lem:TripleAlternative}
Let $\nu>0$. Let $\A\subset [N]$ be a set of density $\alpha=\abs{\A}/N$ with no square differences. Then at least one of the following holds:
\begin{enumerate}
\item ($\A$ is sparse) $\log(1/\alpha)\gg \log N$.
\item (There is a density increment) There is some $N'\gg \nu \alpha^{O(1)}N$ and $A'\subset [N']$ with no non-zero square differences, which has density
\[\alpha'\geq (1+\nu/5)\alpha.\]
\item (There are many large Fourier coefficients close to rationals of different denominators) There are $B,Q\ll \alpha^{-O(1)}$ and a set $\B\subset \bbq_{\leq Q}$ such that both of the following hold:
\begin{enumerate}
\item For each $a/q\in \B$ there exists $\gamma_{a/q}\in (0,1]$ such that $\norm{\gamma_{a/q}-a/q}\ll \alpha^{-O(1)}/N$ and
\[\sum_{\frac{a}{q}\in \B}\Abs{\widehat{\ind{\A}}(\gamma_{a/q})}\gg \frac{B\abs{\A}Q^{1/2}}{\log(1/\alpha)^{O(1)}}.\]
\item For every $1\leq q\leq Q$ we have
\[\abs{ \B\cap \bbq_{=q}}\leq \nu B^2.\]
\end{enumerate}
\end{enumerate}
\end{lemma}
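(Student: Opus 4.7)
The plan is to apply Lemma~\ref{lem:Rational} to obtain an initial set of rationals with large Fourier coefficients, and then dichotomise according to how the denominators are distributed among this set. If they are spread out across many denominators we land directly in conclusion (3), while a concentration at a single denominator feeds Lemma~\ref{lem-dens} to produce a density increment.

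More precisely, we may assume $\alpha \geq N^{-1/3}$, since otherwise $\log(1/\alpha) \gg \log N$ and conclusion (1) is immediate. Applying Lemma~\ref{lem:Rational} then produces parameters $B, Q, K$ with $\alpha^{O(1)} \ll B,Q,K \ll \alpha^{-O(1)}$, frequencies $\gamma_{a/q}$ with $\norm{\gamma_{a/q} - a/q} \ll \alpha^{-O(1)}/N$, and a set $\B_0 \subset \bbq_{\leq Q}$ already verifying condition (3)(a) of the present lemma; note the exponent $2$ on $\log(1/\alpha)$ provided by Lemma~\ref{lem:Rational}(1) is stronger than the $O(1)$ exponent permitted in (3)(a). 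It remains either to verify (3)(b) for this $\B_0$ or to extract a density increment.

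We now split cases according to whether there exists a denominator $q^* \in [1,Q]$ with $\abs{\B_0 \cap \bbq_{=q^*}} > \nu B^2$. If no such $q^*$ exists, then $\B = \B_0$ satisfies both (3)(a) and (3)(b) and we land in case (3). Otherwise, fix such a $q^*$. Summing property (2) of Lemma~\ref{lem:Rational} over the elements of $\B_0 \cap \bbq_{=q^*}$ yields
\[
\sum_{\tfrac{a}{q^*} \in \B_0 \cap \bbq_{=q^*}} \int_{\mathfrak{M}(a/q^*;N,K)} \abs{\widehat{g}(\gamma)}^2 \td\gamma \gg \nu B^2 \cdot \frac{\alpha\abs{\A}}{B^2} = \nu\alpha\abs{\A},
\]
which is exactly the hypothesis of Lemma~\ref{lem-dens} with parameter (a constant multiple of) $\nu$. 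That lemma then produces $N' \gg \nu\alpha N/(K(q^*)^2) \gg \nu\alpha^{O(1)}N$ and a set $\A' \subset [N']$ with no non-zero square differences and density $\alpha' \geq (1+\nu/20)\alpha$, which is conclusion (2).

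The only mildly delicate point is that Lemma~\ref{lem-dens} also requires $\nu\alpha N/(K(q^*)^2)$ to be sufficiently large; since $K(q^*)^2 \ll \alpha^{-O(1)}$ this amounts to $N \gg \nu^{-1}\alpha^{-O(1)}$. In the regime of interest this follows from $\log(1/\alpha) \ll \log N$, and otherwise the edge case collapses into conclusion (1). Beyond this parameter bookkeeping, the argument is a direct assembly of Lemmas~\ref{lem:Rational} and~\ref{lem-dens}, so I do not anticipate any real obstacle.
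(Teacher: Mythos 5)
Your proposal is correct and takes essentially the same route as the paper: the paper assumes (1) and (2) fail and combines Lemma~\ref{lem:Rational}(2) with the contrapositive of Lemma~\ref{lem-dens} to force $\abs{\B\cap\bbq_{=q}}\ll\nu B^2$, which is precisely your dichotomy read in reverse. The only difference is bookkeeping of implied constants (and the ``sufficiently large'' hypothesis of Lemma~\ref{lem-dens}, which you flag and the paper silently absorbs), all of which is handled by rescaling $\nu$ by an absolute constant.
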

%
%
%
%
\begin{proof}
Assume that neither $(1)$ nor $(2)$ hold, so we wish to establish $(3)$. Since $(2)$ does not hold, by  Lemma \ref{lem-dens}, we have that for any $q$
\[
\sum_{\frac{a}{q}\in \bbq_{=q}}\int_{\mathfrak{M}(\frac{a}{q}:N,K)}|\widehat{g}(\gamma)|^2\td\gamma \leq \nu\alpha \abs{\A}.
\]
for any $K\ll \alpha^{-O(1)}$.
Since $(1)$ does not hold, we have $\alpha\ge N^{-1/8}$ and so $\A$ satisfies the conditions of Lemma \ref{lem:Rational}. Thus, by Lemma \ref{lem:Rational} there are $B,Q,K\ll\alpha^{-O(1)}$ and $\B\subset\bbq_{\le Q}$ such that for all $a/q\in\B$ there exists $\gamma_{a/q}$ such that $\| \gamma_{a/q}-a/q\|\ll \alpha^{-O(1)}/N$ and 
\[
\sum_{\frac{a}{q}\in \B}\Abs{\widehat{\ind{\A}}(\gamma_{a/q})}\gg B\frac{\abs{\A}Q^{1/2}}{\log(1/\alpha)^{O(1)}},
\]
and for all $a/q\in\B$
\[
\int_{\mathfrak{M}(\tfrac{a}{q};N,K)}\Abs{\widehat{g}(\gamma)}^2\td\gamma\gg \frac{\alpha \abs{\A}}{B^2}.
\]
Summing this second inequality over $a/q\in\B\cap\bbq_{=q}$ we see that
\[
\frac{\alpha \abs{\A}}{B^2}|\B\cap\bbq_{=q}|\ll \sum_{\tfrac{a}{q}\in \B\cap\bbq_{=q}} \int_{\mathfrak{M}(\tfrac{a}{q};N,K)}\Abs{\widehat{g}(\gamma)}^2\td\gamma\ll \nu\alpha \abs{\A}.
\]
Thus $|\B\cap\bbq_{=q}|\ll \nu B^2$, as required.
\end{proof}
%
%
%
%
\section{Refined density increment and proof of Theorem \ref{thmain}}
%
%
%
%
We will now show that there cannot be a large set of rationals with distinct denominators each of which has a large Fourier coefficient. This relies on Theorem \ref{th:main} which shows that there is a lack of additive structure amongst such rationals, and a variant of Chang's lemma \cite{Chang} (or its predecessors such as the Montgomery-Hal\'{a}sz method \cite{Montgomery}) which shows that any large set of frequencies with large Fourier coefficients must have some additive structure, and is the key way in which our argument differs from previous approaches. Ultimately this will show that for a suitable choice of parameter $\nu$, the third possibility in Lemma~\ref{lem:TripleAlternative} cannot occur, and Lemma \ref{lem:TripleAlternative} can be refined to give a density increment. An iterative application of this density increment then proves our main result, Theorem \ref{thmain}.

\begin{lemma}[Variant of Chang's Lemma]\label{lem:Chang}
Let $\eta>0$, let $\A\subset[N]$ be a set of density $\alpha=\abs{\A}/N$ and let $\B\subset(0,1]$. Then, for each $m\ge 1$,
\[
\sum_{b\in\B}|\widehat{\ind{\A}}(b)|\ll |\A|\alpha^{-1/2m}E_{2m}(\B;1/2N)^{1/2m},
\]
where the approximate additive energy $E_{2m}(\C;\delta)$ is given by
\[
E_{2m}(\C;\delta):=|\{b_1,\dots,b_{2m}\in\C:\,\|b_1+\dots+b_m-b_{m+1}\dots-b_{2m}\|\le \delta\}|
\]
(where we recall that $\|\cdot \|$ denotes the distance to the nearest integer).
\end{lemma}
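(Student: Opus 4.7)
The plan is the standard dualisation-plus-H\"{o}lder argument behind Chang's lemma, adapted so that additive energy appears in place of a dissociativity hypothesis. First I would pick unimodular phases $\epsilon_b\in\bbc$ so that $\epsilon_b\widehat{\ind{\A}}(b)=|\widehat{\ind{\A}}(b)|$ and form the dual function $f(a):=\sum_{b\in\B}\epsilon_b e(ba)$. Swapping the order of summation in $\sum_{b\in\B}|\widehat{\ind{\A}}(b)|$ gives $\sum_{a\in\A}f(a)$, and H\"{o}lder's inequality in the variable $a$ yields
\[
\sum_{b\in\B}|\widehat{\ind{\A}}(b)|\le|\A|^{1-1/2m}\Bigl(\sum_{a\in\A}|f(a)|^{2m}\Bigr)^{1/2m}.
\]

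The next step is to control $\sum_{a\in\A}|f(a)|^{2m}$ by replacing $\ind{\A}$ with a nonnegative majorant $\beta:\bbz\to\bbr_{\ge 0}$ of $\ind{[N]}$ satisfying $\sum_a\beta(a)\ll N$ and $|\widehat{\beta}(t)|\ll N\cdot\ind{|t|\le 1/N}$; a standard Beurling-Selberg type construction supplies such a $\beta$. Expanding $|f(a)|^{2m}=f(a)^{m}\overline{f(a)^{m}}$, summing against $\beta$ first, and swapping sums gives
\[
\sum_{a\in\A}|f(a)|^{2m}\le\sum_{b_1,\ldots,b_{2m}\in\B}\epsilon_{b_1}\cdots\bar\epsilon_{b_{2m}}\,\widehat{\beta}(t),\qquad t=b_1+\cdots+b_m-b_{m+1}-\cdots-b_{2m}.
\]
Taking absolute values and using the support properties of $\widehat{\beta}$ bounds the right-hand side by $\ll N\cdot E_{2m}(\B;1/N)$.

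Combining the two estimates and using $|\A|=\alpha N$ (so that $|\A|^{1-1/2m}N^{1/2m}=|\A|\alpha^{-1/2m}$) yields the claimed bound. I expect the main technical nuisance to be the construction of the majorant $\beta$: one really wants $\widehat{\beta}$ essentially supported in $|t|\le 1/N$ so that the object $E_{2m}(\B;1/N)$ appearing in the hypothesis is exactly what drops out. This is a classical Beurling-Selberg construction, so is not a serious obstacle; if one prefers to avoid it, one can work with $\ind{[N]}$ directly and dyadically decompose $|\widehat{\ind{[N]}}(t)|\ll\min(N,1/\|t\|)$, absorbing a harmless logarithmic loss into the implicit constant.
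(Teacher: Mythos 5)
Your proof is correct and follows essentially the same route as the paper: phase dualisation plus H\"{o}lder's inequality, followed by bounding the $2m$-th moment via a nonnegative majorant whose Fourier transform is supported near $0$ at scale $1/N$. The paper uses the specific Fej\'{e}r kernel $\psi(t)=\sin^2(\pi t)/(\pi t)^2$ scaled to $[N]$ (with $\widehat\psi(\xi)=(1-|\xi|)_+$) where you invoke a Beurling--Selberg/Fej\'{e}r-type majorant, but the construction and its use are identical in substance.
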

%
%
%
%
\begin{proof}
Let $\theta_b\in\bbr$ be a phase such that $e(\theta_b)\widehat{\ind{\A}}(b)=|\widehat{\ind{\A}}(b)|\in\bbr_{>0}$. Then, by H\"{o}lder's inequality, we have
\begin{align}
\sum_{b\in\B}|\widehat{\ind{\A}}(b)|&= \sum_{b\in\B}e(\theta_b)\sum_{a\in\A}e(ab)\nonumber\\
&\le \Bigl(\sum_{a\in\A}1\Bigr)^{1-1/2m}\Bigl(\sum_{a\in\A}\Bigl|\sum_{b\in B}e(\theta_b+b a)\Bigr|^{2m}\Bigr)^{1/2m}.\label{eq:HolderBound}
\end{align}
Let $\psi(t):=\sin(\pi t)^2/(\pi t)^2$ so that $\widehat{\psi}(\xi)=\int_{-\infty}^{\infty}\psi(t)e^{-2\pi i \xi t}\td t$ satisfies $\widehat{\psi}(\xi)=1-|\xi|$ for $|\xi|\le 1$ and $\widehat{\psi}(\xi)=0$ for $|\xi|>1$. Since $\psi(t)\ge 0$ and $\psi(t)\ge 4/\pi^2\ge 1/3$ on $[0,1/2]$ we see that
\begin{align*}
\sum_{a\in\A}&\Bigl|\sum_{b\in B}e(\theta_b+ba)\Bigr|^{2m}\le 3\sum_{n\in\bbz}\psi\Big(\frac{n}{2N}\Bigr)\Bigl|\sum_{b\in B}e(\theta_b+ba)\Bigr|^{2m}\\
&\le 3\sum_{b_1,\dots,b_{2m}\in\B}\Bigl|\sum_{n\in\bbz}\psi\Bigl(\frac{n}{2N}\Bigr)e\Bigl(n(b_1+\dots+b_m-b_{m+1}\dots -b_{2m})\Bigr)\Bigr|
\end{align*}
Applying Poisson summation to the inner sum, and recalling that $\hat{\psi}$ is supported on $[-1,1]$, we see that this is equal to
\begin{align*}
&6\sum_{b_1,\dots,b_{2m}\in\B}N\Bigl|\sum_{h\in \bbz}\hat{\psi}\Bigl(2N(b_1+\dots+b_m-b_{m+1}\dots-b_{2m}-h)\Bigr)\Bigr|\\
&\ll N|\{b_1,\dots,b_{2m}\in\B:\,\|b_1+\dots+b_m-b_{m+1}\dots-b_{2m}\|\le 1/2N\}|.
\end{align*}
Substituting this into \eqref{eq:HolderBound} and rearranging then gives the result.
\end{proof}
%
%
%
%
\begin{lemma}\label{lem-toit}
Let $A\subset [N]$ be a set of density $\alpha=\abs{A}/N$ with no non-zero square differences. There exists an absolute constant $c>0$ such that if
\[\nu = \exp\brac{-c \frac{\log(1/\alpha)}{\log\log(1/\alpha)}}\]
then either
\begin{enumerate}
\item $\log(1/\alpha) \gg \log N/\log\log N$, or
\item there are $N'\gg \alpha^{O(1)}N$ and $A'\subset [N']$ with no non-zero square differences, which has density
\[\alpha'\geq (1+\nu/5)\alpha.\]
\end{enumerate}
\end{lemma}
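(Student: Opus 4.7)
The plan is first to dispense with the sparse case: if $\alpha \leq N^{-1/3}$ then $\log(1/\alpha) \geq \tfrac{1}{3}\log N$ and alternative (1) of the lemma holds directly. Otherwise $\alpha \geq N^{-1/3}$ and Lemma \ref{lem:TripleAlternative} applies with the given $\nu$. If its alternatives (1) or (2) occur, they translate into the corresponding alternatives of the present lemma. So we may assume we are in case (3) of Lemma \ref{lem:TripleAlternative}: there exist $B, Q \ll \alpha^{-O(1)}$, frequencies $\gamma_{a/q}$ with $\norm{\gamma_{a/q}-a/q} \ll \alpha^{-O(1)}/N$, and $\B \subset \bbq_{\leq Q}$ with $|\B \cap \bbq_{=q}| \leq \nu B^2$ for every $q$, such that
\[
\sum_{a/q \in \B} \Abs{\widehat{\ind{\A}}(\gamma_{a/q})} \gg \frac{B\abs{\A}Q^{1/2}}{\log(1/\alpha)^{O(1)}}.
\]
The goal is to show this situation is self-contradictory.

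I apply Lemma~\ref{lem:Chang} to the set $\{\gamma_{a/q} : a/q \in \B\}$ with a parameter $m \geq 2$ to be chosen, giving
\[
\sum_{a/q \in \B}\Abs{\widehat{\ind{\A}}(\gamma_{a/q})} \ll \abs{\A}\, \alpha^{-1/2m}\, E_{2m}(\{\gamma_{a/q}\};\,1/N)^{1/2m}.
\]
The first technical step is to replace the approximate additive energy on the right by the exact energy $E_{2m}(\B)$. Any $2m$-tuple contributing to the approximate energy satisfies $\Abs{\sum_{i\leq m} a_i/q_i - \sum_{i>m} a_i/q_i} \leq (1+2m\alpha^{-O(1)})/N$, while any nonzero rational sum with denominators in $[1,Q]$ has magnitude at least $1/Q^{2m}$. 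Hence if $mQ^{2m}\leq c\alpha^{O(1)}N$ the two energies agree; with the eventual choice $m \asymp \log\log(1/\alpha)$ and $Q \ll \alpha^{-O(1)}$ this reduces to $\log(1/\alpha)\log\log(1/\alpha) \ll \log N$, which follows from the failure of alternative (1).

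Now Theorem~\ref{th:main} applied with $n = \nu B^2$ yields $E_{2m}(\B) \leq (\log(mQ))^{C^m}(Q\nu B^2)^m$. Substituting into the Chang bound and comparing with the lower bound from (3a), after cancelling the common factor $\abs{\A} B Q^{1/2}$, gives
\[
\nu^{1/2} \gg \frac{\alpha^{1/2m}}{(\log(mQ))^{C^m/(2m)} \log(1/\alpha)^{O(1)}}.
\]
I then take $m = \lfloor c_0 \log\log(1/\alpha) \rfloor$ with $c_0 < 1/\log C$ sufficiently small. Then $C^m \leq (\log(1/\alpha))^{c_0 \log C}$, which makes the factor $(\log(mQ))^{C^m/(2m)}$ equal to $\exp(o(\log(1/\alpha)/\log\log(1/\alpha)))$ since $\log(mQ) \ll \log(1/\alpha)$; meanwhile $\alpha^{1/2m} = \exp(-\log(1/\alpha)/(2c_0 \log\log(1/\alpha)))$. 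Combining, the displayed bound forces $\nu \gg \exp(-c_1 \log(1/\alpha)/\log\log(1/\alpha))$ for some absolute $c_1>0$. Choosing the constant $c$ in the statement larger than $c_1$ contradicts the hypothesis on $\nu$, so case (3) is impossible and we must be in alternative (1) or (2), as required.

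The main obstacle is calibrating $m$: we need $\alpha^{1/2m}$ not too small (favouring large $m$) while keeping $(\log(mQ))^{C^m}$ under control (favouring small $m$). Because $C^m$ grows doubly-exponentially in $m$, the unique sweet spot is $m \asymp \log\log(1/\alpha)$ with implicit constant tied to $C$, at which both quantities are polylogarithmic in $\alpha^{-1}$ and their ratio is $\exp(-\Theta(\log(1/\alpha)/\log\log(1/\alpha)))$, precisely the shape of $\nu$ prescribed in the statement. Everything else — the passage from approximate to exact additive energy, and the invocations of Lemmas~\ref{lem:TripleAlternative} and \ref{lem:Chang} together with Theorem~\ref{th:main} — is fairly routine.
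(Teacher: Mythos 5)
Your proposal is correct and follows essentially the same route as the paper's own proof: reduce to case (3) of Lemma~\ref{lem:TripleAlternative}, apply the Chang-type bound of Lemma~\ref{lem:Chang}, pass from approximate to exact additive energy using the $Q^{-2m}$ separation of rationals (valid since alternative (1) fails), invoke Theorem~\ref{th:main} with $n=\nu B^2$ so the powers of $B$ and $Q$ cancel, and optimise $m\asymp\log\log(1/\alpha)$ to contradict the choice of $\nu$. The only cosmetic difference is that you dispose of the sparse/degenerate regime by the direct case split $\alpha\le N^{-1/3}$, whereas the paper additionally invokes S\'ark\"ozy's bound to ensure $\alpha$ is small enough for the asymptotics (e.g.\ so that $m\ge 2$ can be chosen); this does not affect the substance of the argument.
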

%
%
%
%
\begin{proof}
As before, we may assume that $\log N \ll \alpha^{-O(1)}$, by the result of \cite{Sa:1978}. We assume that $(1)$ and $(2)$ do not hold, and hope to arrive at a contradiction, for a suitable choice of $\nu$.

We apply Lemma~\ref{lem:TripleAlternative}, and find that we must be in the third case since otherwise $(1)$ or $(2)$ would hold. Thus there are $B,Q\ll \alpha^{-O(1)}$ and a set $\B\subset \bbq_{\leq Q}$ such that
for each $a/q\in \B$ there exists $\gamma_{a/q}=a/q+O(\alpha^{-O(1)}/N)$ satisfying
\[\sum_{\frac{a}{q}\in \B}\Abs{\widehat{\ind{\A}}(\gamma_{a/q})}\gg \frac{B\abs{\A}Q^{1/2}}{\log(1/\alpha)^{O(1)}},\]
and for every $1\leq q\leq Q$ we have $\abs{ \B\cap \bbq_{=q}}\leq \nu B^2$. By the pigeonhole principle, there must exist some $\B'\subset \B$ which is contained in an interval of width at most $1/8m$ such that
\[\sum_{\frac{a}{q}\in \B'}\Abs{\widehat{\ind{\A}}(\gamma_{a/q})}\gg \frac{B\abs{\A}Q^{1/2}}{m\log(1/\alpha)^{O(1)}}.\]

Let $m\geq 2$ be some integer to be chosen later. We now apply Lemma \ref{lem:Chang}, which shows that for $\Gamma:=\{\gamma_b:\,b\in\B'\}$
\[
\frac{B \abs{\A}Q^{1/2}}{m\log(1/\alpha)^{O(1)}}\ll \abs{\A}\alpha^{-1/2m}E_{2m}(\Gamma;1/2N)^{1/2m}.
\]
Since $\B'$ is contained in an interval of width at most $1/8m$ we know that $b_1+\cdots-b_{2m}\in [-1/4,1/4]$. In particular, since $\abs{\gamma_b-b}\ll \alpha^{-O(1)}/N$ for $b\in \B$, provided $m\alpha^{-O(1)}< cN$ for some sufficiently small $c>0$, we have $\gamma_{b_1}+\cdots-\gamma_{b_{2m}}\in (-1/2,1/2)$, and so
\[E_{2m}(\Gamma;1/2N) = \lvert \{ b_1,\ldots,b_{2m}\in \B' : \lvert \gamma_{b_1}+\cdots-\gamma_{b_{2m}}\rvert \leq 1/2N\}\rvert.\] 

Furthermore, since $\B\subset\bbq_{\le Q}$, $b_1+\cdots-b_{2m}$ is always a rational of denominator at most $Q^{2m}$ for $b_1,\dots,b_{2m}\in\B$. Therefore if $b_1+\cdots-b_{2m}$ is not zero then it is at least $Q^{-2m}$ in absolute value. As before, since $\abs{\gamma_b-b}\ll \alpha^{-O(1)}/N$, provided $m Q^{O(m)}\alpha^{-O(1)}< cN$ for some small constant $c>0$, it follows that for any $\gamma_{b_1},\dots,\gamma_{b_{2m}}\in\Gamma$ either $\abs{\gamma_{b_1}+\cdots-\gamma_{b_{2m}}}\ge Q^{-2m}/2$ or $b_1+\cdots-b_{2m}=0$. Therefore, provided $m Q^{O(m)}\alpha^{-O(1)}<cN$ for sufficiently small $c>0$, the approximate additive energy $E_{2m}(\Gamma;1/N)$ actually only counts the times when the corresponding sum of rationals is zero, so
\[
E_{2m}(\Gamma;1/2N)=E_{2m}(\B').
\]
Recalling that $Q\ll \alpha^{-O(1)}$, we have shown that either $\alpha^{-O(m)}\gg N$ or
\begin{equation}
E_{2m}(\B')  \gg \alpha \brac{\frac{B Q^{1/2}}{m\log(1/\alpha)^{O(1)}}}^{2m}.\label{eq:EnergyBound}
\end{equation}
We impose the condition $m\ll \log\log(1/\alpha)$, so that $\alpha^{-O(m)}=o(N)$ since we are assuming that $(1)$ does not hold, so we have \eqref{eq:EnergyBound}. We now apply Theorem~\ref{th:main} to bound $E_{2m}(\B')$ from above, which shows that for some absolute constant $C>0$ we have
\begin{equation}
m^{O(m)}(\log Q)^{C^m}(\nu B^2 Q)^{m}\gg \alpha \brac{\frac{ B Q^{1/2}}{\log(1/\alpha)^{O(1)}}}^{2m}.\label{eq:ThmBound}
\end{equation}
Here we used the assumption that $|\B'\cap\mathbb{Q}_{=q}|\le \lvert \B\cap \mathbb{Q}_{=q}\rvert \leq \nu B^2$ for all $q$ since $(2)$ does not hold.

The key feature of this bound is that the powers of $B$ and $Q$ exactly cancel, and in particular the lower bound on $\nu$ in terms of $\alpha$ is only of order $\alpha^{-O(1/m)}\log(1/\alpha)^{O(1)}$. We will derive a contradiction from this bound with a suitable choice of $\nu$, thereby proving the lemma. First note that we can rewrite \eqref{eq:ThmBound} as 
\[
\nu \gg \frac{\alpha^{1/m}}{m^{O(1)}\log(1/\alpha)^{O(1)}}\exp\brac{-C^m\frac{\log \log Q}{m}}.\]
Since $Q\ll \alpha^{-O(1)}$, if we choose $m=\lceil c'\log\log(1/\alpha)\rceil$, for some sufficiently small constant $c'>0$, then this gives 
\[\nu \gg \exp\brac{-O\brac{\frac{\log(1/\alpha)}{\log\log(1/\alpha)}}},\] 
which gives a contradiction for a suitable choice of the constant $c$ in our definition of $\nu$. This completes the proof.
\end{proof}
%
%
%
%
We may now finish the proof of our main theorem with an iterative application of Lemma~\ref{lem-toit}. 
%
%
%
%
\begin{proof}[Proof of Theorem~\ref{thmain}]
Suppose that $\A\subset [N]$ has density $\alpha=\abs{\A}/N$ and has no square differences. We wish to show that
\[\log(1/\alpha) \gg (\log\log N)(\log\log\log N).\]
Let 
\[\nu := \exp\brac{-c \frac{\log(1/\alpha)}{\log\log(1/\alpha)}}\]
be as in Lemma \ref{lem-toit}. If $\log(1/\alpha) \gg \log N/\log\log N$ then we are done. Otherwise, by Lemma~\ref{lem-toit}, there are $N'\geq \alpha^{O(1)}N$ and $\A'\subset [N']$ which has no square differences, with density 
\[\alpha' \geq (1+\nu/5)\alpha.\]
Repeatedly applying Lemma \ref{lem-toit}, we obtain some sequence $N_1,\ldots,N_t$ of integers and associated sets $\A_t\subset [N_t]$ such that
\begin{enumerate}
\item Each set $\A_t$ has no non-zero square differences.
\item $\A_t\subset [N_t]$ has density $\alpha_t=|\A_t|/N_t \geq (1+\nu/5)^t\alpha$.
\item We have $N_t \geq \alpha^{O(t)}N$.
\end{enumerate}
This process can only terminate if $N_t<N^{1/2}$, since otherwise all conditions of Lemma \ref{lem-toit} remain satisfied. However, the density of any set can never exceed $1$, so we must have $\alpha(1+\nu/20)^t\le \alpha_t\le 1$, which implies that
\[
t\ll \nu^{-1}\log(1/\alpha).
\]
Therefore
\[
N^{1/2}>N_t\ge \alpha^{O(t)}N\gg N\exp\Bigl(-O\Bigl(\nu^{-1}\log(1/\alpha)^2\Bigr)\Bigr).
\]
Thus
\[\log N \ll \nu^{-1}(\log 1/\alpha)^2.\]
Recalling that $\log(1/\nu)\ll \log(1/\alpha)/\log\log(1/\alpha)$, taking logarithms of both sides and rearranging yields
\[
\frac{\log(1/\alpha)}{\log\log(1/\alpha)}\gg \log\log{N}.
\]
This implies $\log(1/\alpha)\gg (\log\log{N})(\log\log\log{N})$, which gives the result.
\end{proof}
%
%
%
%

\end{document}